\documentclass[10pt,reqno]{amsart}
   % MODIFYING AMSART.CLS: for JOSA
     \makeatletter
     \def\section{\@startsection{section}{1}%
     \z@{.7\linespacing\@plus\linespacing}{.5\linespacing}%
     {\bfseries%\normalfont\scshape
     \centering
     }}
     \def\@secnumfont{\bfseries}
     \makeatother
   % END OF MODIFICATION OF AMSART.CLS.

\setlength{\textheight}{19.5 cm}
\setlength{\textwidth}{12.5 cm}
\newtheorem{theorem}{Theorem}[section]
\newtheorem{lemma}[theorem]{Lemma}
\newtheorem{proposition}[theorem]{Proposition}
\newtheorem{corollary}[theorem]{Corollary}
\theoremstyle{definition}
\newtheorem{definition}[theorem]{Definition}

\theoremstyle{remark}
\newtheorem{remark}[theorem]{Remark}
\numberwithin{equation}{section}

\setcounter{page}{1}

\makeatletter
\newcommand\@received{Received 2024-3-4;
   Accepted 2024-6-2; Communicated by the editors.}
\renewcommand\@adminfootnotes
{\let \@makefnmark \relax \let \@thefnmark \relax
\@footnotetext{\@received }
\ifx \@empty \@date \else \@footnotetext {\@setdate }\fi
\ifx \@empty \@subjclass \else \@footnotetext {\@setsubjclass }\fi
\ifx \@empty \@keywords \else \@footnotetext {\@setkeywords }\fi
\ifx \@empty \thankses \else \@footnotetext
{\def \par {\let \par \@par }\@setthanks }\fi}
\makeatother

%%%%% End of setup (Do not change) %%%%%%%%%%%%%%%%%

\usepackage{url}
\usepackage{xcolor}

 \usepackage{type1cm}        % activate if the above 3 fonts are

\usepackage{graphicx}        % standard LaTeX graphics tool
 \usepackage{subcaption}
 \usepackage[export]{adjustbox}
\usepackage{hyperref}
\usepackage{eulervm}

   \usepackage{latexsym, amsmath}
  \usepackage{amsfonts, amssymb}
 \usepackage{relsize}

\def\E{{\ensuremath{\mathbb {E}}}}

\def\R{\ensuremath{\mathbb {R}}}
\def\C{\ensuremath{\mathbb {C}}}

\def\Z{\ensuremath{\mathbb {Z}}}

\def\Pp{\ensuremath{\mathbb {P}}}

\def\si{\sigma}

\def\mfB{\mathfrak{B}}

\def\la{\lambda}

\def\({\left(}

\def\){\right)}
\def\al{\alpha}
\def\vf{\varphi}

\def\vep{\varepsilon}
\def\abs#1{\vert\, #1\,\vert}

\def\be{\beta}

\def\ga{\gamma}

\def\mcS{{\mathcal S}}

\def\mcD{{\mathcal D}}
\def\mcN{{\mathcal N}}

\def\mcU{{\mathcal U}}

\def\mcW{{\mathcal W}}

\def\mcI{{\mathcal I}}
\def\mcJ{{\mathcal J}}

\def\mcF{{\mathcal F}}

\def\d{\,\mathrm{d}}
\def\iy{\infty}
\def\bydef{:=}

\def\beq{\begin{equation}}
\def\eeq{\end{equation}}
\def\bei{\begin{itemize}}
\def\eei{\end{itemize}}

\def\trp{(\Omega, \mfB, \Pp)}
\def\gLf{\text{generalized Lambert function}}
\def\dar{\downarrow}
% default enlargement applied by `\bigsum`: 3 steps of 1.2

\begin{document}
\title[On the Generalized Lambert Function]{On the Generalized Lambert Function} %%Equation} %\\ 
 
\author[Alexander Kreinin]{Alexander Kreinin*}
\thanks{* Corresponding author}
\address{Alexander Kreinin: Department of Computer Science, University of Toronto, Toronto, ON, M5S 2E4, CANADA}
\email{alexander.kreinin@utoronto.ca}

\author[Andrey Marchenko]{Andrey Marchenko}
\address{Andrey Marchenko: LayerOneFinancial}
\email{avmarch@gmail.com}

\author[Vladimir V. Vinogradov]{Vladimir V. Vinogradov}
\address{Vladimir V. Vinogradov: Department of Mathematics, Ohio University, Athens, OH 45701 USA}
 \email{vinograd@ohio.edu}
 \address{Vladimir V. Vinogradov:
 Department of Computer and Mathematical Sciences, IA 4118,
University of Toronto Scarborough, 1265 Military Trail,
Toronto, Ontario M1C 1A4, Canada}
\email{vv.vinogradov@utoronto.ca}
 \date{27 March, 2025}

  \subjclass[2020]{Primary 60E05, 60F05; Secondary 60E07}
\keywords{Continued exponential representation, Galton-Watson process, generalized Lambert function, iteration process, 
 moment structure, Monte Carlo simulation, probability--generating function}

\begin{abstract}
We   consider a particular generalized Lambert function, $y(x)$, defined by the implicit equation
$ y^\be =  1 - e^{-xy}$, with $x>0$ and $ \be >  1$.  
Solutions to this equation can be found in terms of a certain continued exponential defined by formula (\ref{eq:gL_cr}).
 Asymptotic and structural properties of a non-trivial
  solution, $y_\be(x)$, and its connection to the extinction probability of related
branching processes are discussed. We demonstrate that this function constitutes  a cumulative distribution function of a 
previously unknown non-negative absolutely continuous random variable. 
\end{abstract}

\maketitle

\section{Introduction}
The Lambert $W$ function has a long history going back to $18^{th}$ century. Nevertheless, it was
rigorously introduced in 1996 only and studied relatively recently. 
This function is closely related to the logarithmic function and arises  in
 many models of the different areas of  sciences, including a  number of problems in ecology and evolution theory.
 The main reason why the Lambert $W$ function became so popular is because this function allows us to solve explicitly 
 important models for which this is not possible in terms of elementary functions. 
  
%\eei
The present paper is partly devoted to the interplays of the classical analysis, in particular, transcendental functions, 
probability theory  as well
as certain areas of computational mathematics, namely iteration schemes,  Monte Carlo simulation
and evolution of dynamical systems.

The classical Lambert $W$ function is  defined as (the multi-valued) inverse function to
$$y=x\cdot e^x, $$ (see \cite{CG}) and therefore, it satisfies the following implicit equation:
$$ W(x)\cdot e^{W(x)}=x.  $$
This function
was introduced by Lambert \cite{La} in the second half of $18^{th}$ century and became a subject of Leonhard Euler's studies
\cite{Eu}  on the transcendental equations. Since then thousands of papers dedicated to the Lambert function and its
generalizations were published.

The function $W$,  has many applications in various areas including
 mathematical and theoretical physics  \cite{CG}, \cite{DG}, \cite{LR}, \cite{Me}. 
This function is deeply connected to many
explicitly solvable models and problems ranging from population growth rates to fertilization kinetics and disease dynamics
\cite{BWH}, \cite{LR},  \cite{MaS}, \cite{Me},  \cite{SMM}, \cite{Sct}.

The function $W$ provides an exact solution to the equation
$$e^{-cx} =a\cdot(x-b) $$
(see \cite{CG}).
The function $W_p$, the principal real branch  of the Lambert $W$ function, has the Taylor series expansion 
$$ W_p(x) = \sum_{n=1}^\iy \frac{(-n)^{n-1}}{n!} x^n,  $$
with the  radius of convergence, $\rho = e^{-1}$.
It also admits the following {\it continued exponential } representation \cite{CG}
\beq
    W(x) = x\cdot e^{-x e^{-x e^{-x\dots}}}.  \label{eq:std_W_cer}
  \eeq  
  
  Generalizations of the Lambert $W$ function were also relatively recently considered.
  %% introduced in literature. 
  In particular,
  in quantum mechanics research on Bose-Fermi statistics the  equation
  $$
    e^{-cx}=a\cdot \frac{P_n(x)}{Q_m(x)},
   $$  where $P_n$ and $Q_m$ are particular polynomials of degrees $n$ and $m$ respectively,
  was analyzed \cite{MeBa}. In \cite{BWH}, \cite{Cas} applications of the generalized Lambert functions are
  recently seen in the area of 
  atomic, molecular, and optical physics.

  In the theory of branching processes, a generalization of the Lambert equation describes the extinction probability
  of an explosive Galton-Watson branching process. %% Namely this problem appeared to be the
   %% main motivation of the results presented in the papers \cite{PV3}, \cite{V2}.
   
   Probabilistic flavour of the solution to the generalized Lambert equation (see Section~\ref{sec:ext_prob})
    naturally leads to an interesting family of
   random variables that we denote $\xi_\be$ in what follows.
    The moments of $\xi_\be$  can be computed in closed form. The study of the analytical properties of the moments 
    lead to an elegant area of mathematics called Tornheim series (see \cite{Torn} as well as
    \cite{AliD},    \cite{BaB},  \cite{Bor2}, \cite{Bor1},     \cite{Kub}).
   
   The structure of the present paper is as follows. In Section~\ref{sec:ext_prob} we discuss connection between the 
   generalized Lambert function and the extinction probability of the Galton-Watson process.
   In Section~\ref{sec:gLf} the main analytical properties of the \gLf are discussed.
   
  In Section~\ref{sec:asympt_gLf}, we describe the asymptotic expansions and two-sided bounds for the \gLf.
  In Section~\ref{sec:numerics}, the numerical examples of the computation of the {\gLf}   are discussed.
  A probabilistic interpretation of the generalized Lambert function  and characteristics of the
  corresponding random variables are discussed in Section~\ref{sec:rand_var}. Some technical statements are %proved
  deferred to  the Appendix.
   
   To conclude the Introduction, we summarize some relevant notation and terminology.
 Hereinafter, $\C$, $\R$,
$\R_+^1$, $\Z$, $\Z_+$,  stand for the sets of  all complex numbers, all real numbers,
all positive reals, all integers, and all non-negative
 integers, respectively. In what follows, we use the notation $F_\eta(x)$ for the cumulative distribution function
 (or cdf) of the random variable (r.v.) $\eta$. The symbol ``$\bar F_\eta(x)$" stands for the survival function,
 $\bar F_\eta(x)\bydef 1 -  F_\eta(x)$. The notation 
``$\E[\eta^m]$" is used for  the $m^{th}$ moment of the random variable $\eta$, $m\in \Z$. 
The natural logarithm of $x$ is denoted by $\log x$. The Gamma function is traditionally denoted by $\Gamma(x)$,
$x\in\R$, $x\ge 0$.
 The Riemann zeta function is denoted by $\zeta(z)$, $z\in\C$. If  $z$ is real and $z> 1$,
 $\zeta(z) \bydef \sum_{n=1}^\iy\limits \frac{1}{n^z}. $
\section{Extinction probability of a Galton-Watson process }\label{sec:ext_prob}
Extinction probability of a Galton-Watson process satisfies the equation
\beq
     \pi_\nu(z)=z, \quad 0\le z\le 1, \label{eq:extinc_prob}
 \eeq   
where $\pi_\nu(z)$ is the probability  generating function of the number of offsprings, $\nu$, in the branching process
(see \cite{Fel},  \cite{KarlT},  \cite{KD}).
If the  r.v. $\nu$ has a Poisson distribution with parameter $\la>0$, then Equation~(\ref{eq:extinc_prob}) 
takes the form
$$ e^{\la(z-1)}=z, $$
and can be transformed to
$$ x e^{x} = -\la\cdot e^{-\la}. $$
The connection with the classical Lambert $W$ function and the Lambert equation becomes clear in this case.

If the distribution of the r.v. $\nu$ belongs to the class of {\it discrete stable distributions} the probabilities
$ \Pp(\nu = k)$,  are defined by the equation
$$ \Pp(\nu = k) = \frac{1}{k!} \cdot 
{}_{1}\Psi_{1}(-\alpha, k; -\alpha, 0; \la), $$
where
${}_{1}\Psi_{1}$ is a particular case of the Wright function, such that 
given $\delta \in \mathbb{C}$, $\rho  \in (-1,0)\cup(0,\infty)$, argument 
$z  \in \mathbb{C}$ with $|z| < \infty$ and {\it real} $k \geq 0$, 
\begin{equation}	\label{eq:CoHyF}
{}_1\Psi_1(\rho, k; \rho, \delta; z) \bydef
  \sum_{n=0}^\infty \frac{\Gamma(\rho n+k)}{\Gamma(\rho n+\delta)} \cdot  \frac{z^n}{n!}.
\end{equation}
A discrete stable r.v., $\nu$ , %% {\mathcal D}{\mathcal S}_{\alpha, {\mathcal Z}}$ 
with parameters $\la>0$ and 
$\alpha \in (0,1]$ is characterized by its p.g.f., ${\mathcal P}_\al(u)\bydef \E[u^\nu]$
\begin{equation}	\label{DiStPGF}	
{\mathcal P}_\al(u) 
= \exp\{ -\la\cdot (1-u)^{\alpha}\}.
\end{equation}
%% \end{definition}
The well-known case $\alpha = 1$ corresponds to the subclass of {\it Poisson} distributions which are naturally incorporated to the
family of discrete stable laws. 
 
 The probability of ultimate extinction for a supercritical Galton-Watson process 
with discrete stable branching mechanism that starts from one particle appears to be intimately connected to a 
generalized Lambert function.
Indeed, Equation~(\ref{eq:extinc_prob}) can be written as
$$ e^{-\la (1-u )^\al} = u.  $$
Denote $x=(1-u)^\al$. Then $1-u=x^\be$ and the latter equation can be written as follows:
\beq
    e^{-\la x} =1-x^\be,   \quad x\in (0, 1], \quad \la>0, \label{eq:e_pr_ds}
 \eeq
where $\be = \al^{-1}$.

The following rather elementary assertion stipulates the existence and uniqueness of the solution to (\ref{eq:e_pr_ds}).
\begin{proposition}\label{prop:exist_sol}
Let $\la>0$ and $\be>1$. Then
Equation~{\rm{(\ref{eq:e_pr_ds})}}  has the unique solution in the open interval $(0, 1)$. 
\end{proposition}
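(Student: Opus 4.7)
My plan is to reduce the problem to studying the sign behavior of a single auxiliary function. Define
\[ f(x) := 1 - x^\be - e^{-\la x}, \qquad x \in [0,1], \]
so that solutions of (\ref{eq:e_pr_ds}) in $(0,1)$ correspond exactly to zeros of $f$ in $(0,1)$.

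For \emph{existence}, I would evaluate $f$ at the endpoints: $f(0) = 0$ and $f(1) = -e^{-\la} < 0$. Then compute $f'(x) = -\be x^{\be-1} + \la e^{-\la x}$, and observe that $f'(0) = \la > 0$ (here the condition $\be > 1$ is used, so that $\be x^{\be-1} \to 0$ as $x \to 0^+$). Therefore $f$ is strictly positive just to the right of $0$, and by continuity together with $f(1) < 0$ and the intermediate value theorem, $f$ has at least one zero in $(0,1)$.

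For \emph{uniqueness}, I would rely on strict concavity of $f$ on $(0,1]$. Differentiating once more gives
\[ f''(x) = -\be(\be-1) x^{\be-2} - \la^2 e^{-\la x}, \]
and since $\be > 1$ forces $\be(\be-1) > 0$ while $x^{\be-2} > 0$ for $x \in (0,1]$, both summands are strictly negative, so $f''(x) < 0$ throughout $(0,1]$. Thus $f'$ is strictly decreasing, and combined with $f'(0^+) = \la > 0$ and the fact that $f$ must eventually drop below zero by $x = 1$, $f'$ changes sign exactly once in $(0,1)$. Consequently $f$ increases from $0$ to a unique maximum and then decreases monotonically to the negative value $f(1)$, so it crosses zero at exactly one interior point. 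This yields the asserted uniqueness.

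There is no genuine obstacle; the only thing to watch is the case $1 < \be < 2$, where $x^{\be-2}$ is unbounded as $x \to 0^+$. This does \emph{not} harm the argument: $f''$ remains negative (even more strongly negative) near $0$, so strict concavity on $(0,1]$ and hence the monotonicity structure of $f'$ persist, and the argument above goes through uniformly in $\be > 1$.
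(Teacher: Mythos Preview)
Your argument is correct and essentially identical to the paper's: the paper defines $\mcF(x)=e^{-\la x}-1+x^\be$, which is exactly $-f(x)$, and concludes via $\mcF(0)=0$, $\mcF(1)>0$, $\mcF'(0)<0$, and convexity $\mcF''(x)>0$ --- the mirror image of your endpoint values, $f'(0)>0$, and concavity $f''(x)<0$. Your version is a bit more explicit about why convexity/concavity forces a single interior zero, and you anticipate the $1<\be<2$ boundary behavior of $x^{\be-2}$, but the substance is the same.
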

\begin{proof} 
Indeed, consider the function 
\beq 
    {\mcF}(x)\bydef  e^{-\la x} -1+x^\be, \quad \be>1.  \label{eq:mcF_be}
 \eeq
We have
$$ \mcF(0) = 0, \quad {\mcF}(1) > 0, \quad\text{ and the derivative } \mcF^{\prime}(0) < 0. $$
The second derivative  $\mcF^{\prime\prime}(x)>0$. Therefore,  the function $\mcF$ is convex 
in the interval $[0, 1]$ and Equation~(\ref{eq:e_pr_ds}) has a unique solution, $x_0$, satisfying the inequalities
$0<x_0<1$.
\end{proof}
 \begin{figure}[th]
	\begin{center}
	\vspace{-150pt}
	\includegraphics[width=0.9\textwidth]{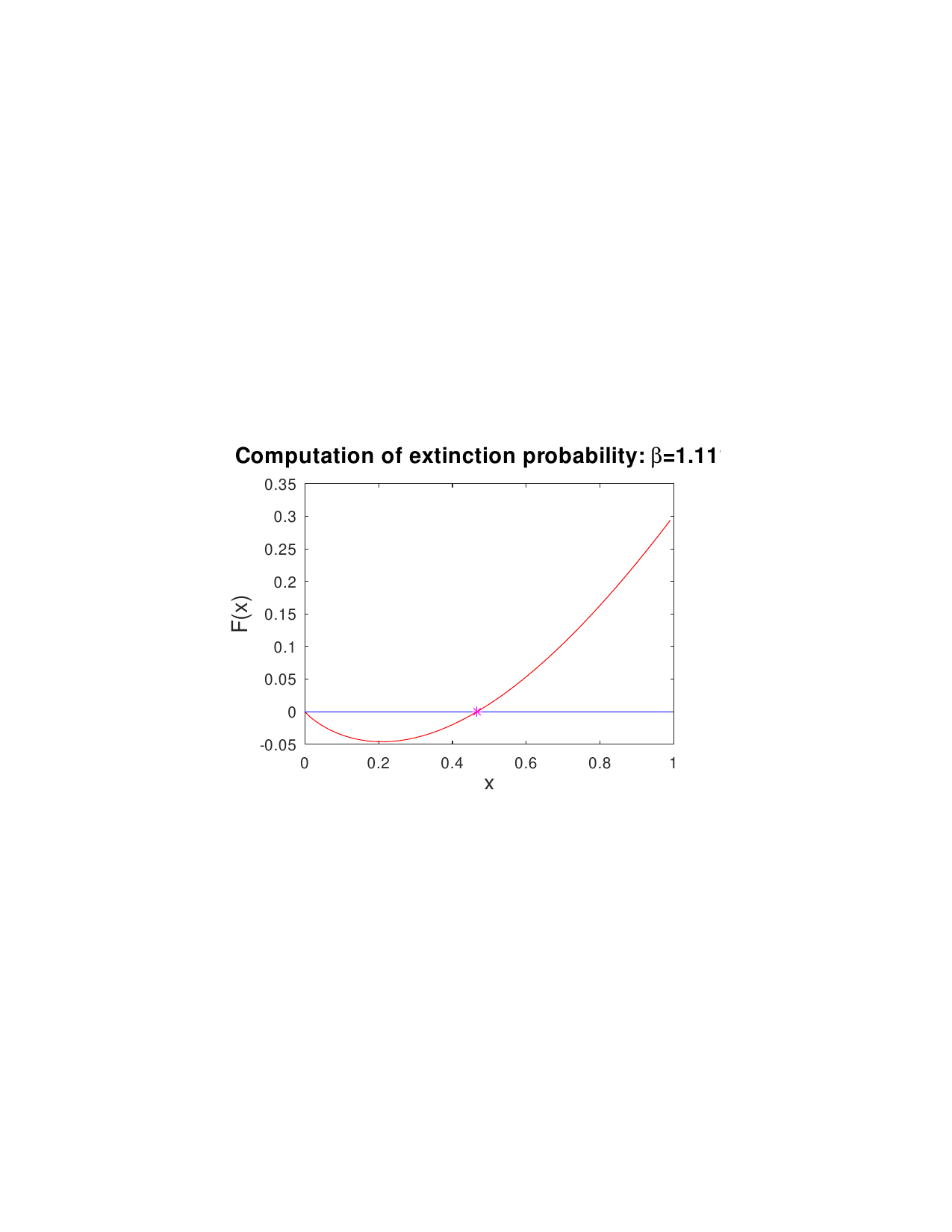}
	\vspace{-160pt}
	\end{center}
	 \caption{ The function $\mcF(x)$.}
  \label{fig:mcF}
\end{figure}
Here, the root of the equation $\mcF(x)=0$ is shown by the star in Figure~\ref{fig:mcF}. For $\be=1.11$ and $\la=1.2$,
the root $x_0\approx 0.46671$. The root of Equation~(\ref{eq:e_pr_ds}) is found by the iteration process described by 
Theorem~\ref{thm:conv_iter} in the next Section. The iteration process is stopped once the iterations, $x_n$,
satisfy the inequality
$$ \abs{x_n - x_{n+1}}\le \vep, $$ where $\vep=10^{-6}$. For the above example, 
the number of iterations $n=25$.

\section{A generalized Lambert function}\label{sec:gLf}
In this section we consider a formal definition and some analytical properties of the \gLf.
Consider the function $x_\be: (0, 1)\to \R^1_+$ defined by the formula
 \beq
        x_{\be}(y) =\begin{cases} -\frac{\log(1-y^\be)}{y}, \qquad 0<y<1,\\
	\qquad 0, \qquad \qquad\quad y=0,\end{cases}  \label{eq:inv_func}
  \eeq
where parameter $\be>1$ is  fixed.  % , $\be>1$. 
\begin{definition}
The inverse function to $x_{\be}$ is called the {\it generalized Lambert function}.
\end{definition}
\noindent We shall denote the generalized Lambert function by $y_\be(x)$ in what follows.

Since $x_{\be} (y)$ is analytic for $y\ne0,1$ and continuous at $y=0$, one can see that $y_{\be} (x)$ is a real-valued,
analytic function for $x>0$.
Both functions $x_{\be} (y)$ and $y_{\be} (x)$ are monotonically increasing. Indeed, the derivative of $x_{\be} (y)$ 
is equal to
$$
    \frac{\d x_{\be} (y)}{\d y}=\frac{\frac{\be\cdot y^\be}{1-y^\be}+\log({1-y^\be})}{1-y^\be}>0 \quad 
    \textrm{for} \ \   y\in (0,1).
$$ 
Then we obtain that
$$
    \frac{dy_{\be} (x)}{dx}>0 \quad \textrm{for} \ \  x>0.
$$
Therefore, the generalized Lambert function is defined uniquely for any $x>0$.
\begin{remark}
 From (\ref{eq:inv_func}) it immediately follows that the function $x_\be(y)$ is represented by the absolutely convergent
   series
   \beq
       x_\be(y) = \sum_{k=1}^\iy \frac{y^{\be k-1}}{k}, \quad 0\le y<1. \label{eq:ser_x_be}
    \eeq
   Since $\be>1$, the derivative
   $$
       \frac{\d x_\be(y)}{\d y}= \sum_{k=1}^\iy \frac{y^{\be k-2}}{k(\be k - 1)}>0
   $$
   for $y\in (0, 1)$. Note that $ \frac{\d x_\be(y)}{\d y}$ blows up at $0$ for $1<\be<2$. For $\be\ge 2$
   it is defined at $0$ by continuity.
   \end{remark}

Let $\al\bydef {1}/{\be}$.
The generalized Lambert function $y_\be (x)$ obviously satisfies the following transcendental equation:
\beq
    y = \(1 - e^{-xy}\)^\al,    \quad x>0,   \label{eq:ybe_def}
 \eeq
with $y \in (0, 1)$ and parameter $\al\in (0, 1)$.

\begin{remark} 
Equation~(\ref{eq:e_pr_ds}) is equivalent to Equation~(\ref{eq:ybe_def}) in the space of continuous functions,
${\mathbf{C}}(\R^1_+)$. The role of the variable $x$ in
(\ref{eq:ybe_def}) plays parameter $\la$, the variable $x$ in (\ref{eq:e_pr_ds}) is represented by $y$ in
(\ref{eq:ybe_def}). Equation~(\ref{eq:ybe_def}) is equivalent to
\beq
    y^\be = 1 - e^{-xy},    \quad x>0,   \label{eq:ybe}
 \eeq
\end{remark}

\begin{remark}
Equation~({\ref{eq:ybe_def}}) has infinitely many discontinuous solutions. Indeed, let $y_\be^{(c)}$ be a continuous solution of
(\ref{eq:ybe}). Then the function
 $$
        y_{\be}^{*}(y) =\begin{cases} y_\be^{(c)}(x), \qquad \,\, x\neq x_\ast,\\
	\qquad 0, \qquad \quad x=x_\ast \end{cases} 
 $$
 is also a solution to ({\ref{eq:ybe_def}}) having discontinuity at $x_\ast$.
\end{remark}
\noindent Now, consider x as a fixed parameter and solve  Equation~(\ref{eq:ybe_def}) with respect to $y$.
\begin{lemma}\label{lem:sol_exists}
 For each $x > 0$ Equation {\rm({\ref{eq:ybe_def}})} has two  non-negative solutions, one of which is always 0, and 
 the other one belongs to $(0, 1)$.
 \end{lemma}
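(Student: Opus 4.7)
My plan is to reduce the lemma to the analysis of a single auxiliary function on $[0,1]$, in essentially the same spirit as Proposition~\ref{prop:exist_sol} but with the roles of variable and parameter swapped. I would first observe that $y=0$ always solves (\ref{eq:ybe}), since $0^\be = 0 = 1 - e^0$. Moreover, any positive solution automatically satisfies $y<1$: if $y^\be = 1 - e^{-xy}$ with $y>0$, then $y^\be < 1$ because $e^{-xy}>0$. So the task reduces to showing that there is exactly one zero of
$$ g(y) \bydef y^\be - 1 + e^{-xy} $$
in the open interval $(0,1)$.

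For the analysis of $g$ I would record the boundary values $g(0)=0$ and $g(1)=e^{-x}>0$, differentiate to obtain $g'(y)=\be y^{\be-1}-xe^{-xy}$ with $g'(0)=-x<0$ (using $\be>1$, so the power term vanishes at the origin), and check that
$$ g''(y) = \be(\be-1)\,y^{\be-2} + x^2 e^{-xy} > 0 \quad \text{on } (0,1). $$
Hence $g$ is strictly convex on $(0,1)$ and strictly decreases from $g(0)=0$ into negative values near the origin; combined with $g(1)>0$, the intermediate value theorem supplies at least one zero in $(0,1)$.

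Uniqueness would follow from the elementary fact that a strictly convex function on an interval has at most two zeros: three distinct zeros $a<b<c$ with $b=\lambda a + (1-\lambda)c$ for some $\lambda\in(0,1)$ would force $0 = g(b) < \lambda g(a) + (1-\lambda)g(c) = 0$. Since $y=0$ already accounts for one zero, precisely one further zero lies in $(0,1)$, as desired.

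The step I would anticipate as slightly delicate is the convexity verification when $1<\be<2$, where $g''(y) \to +\infty$ as $y \dar 0$. This does not cause any genuine difficulty, because strict convexity on the open interval only requires $g''>0$ pointwise there; the continuity of $g$ on the closed interval $[0,1]$, which is what the intermediate-value argument and the chord inequality actually use, remains intact.
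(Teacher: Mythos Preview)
Your proof is correct and is essentially identical to the paper's approach: the paper simply invokes Proposition~\ref{prop:exist_sol}, and your argument is precisely a reproduction of that proposition's convexity proof with the roles of the variable and the parameter swapped (your $g(y)$ is exactly the paper's $\mcF$). The only addition is your explicit remark that any positive solution must lie in $(0,1)$, which is immediate and implicit in the paper's setup.
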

\begin{proof} The statement of Lemma~\ref{lem:sol_exists} follows from Proposition~\ref{prop:exist_sol}.
\end{proof}
 %f:y\mapsto\ f(y)= (1-e^{-xy} )^{1/\be}.
    The function $y_\be(x)$ is intimately connected  to the mapping, $\widehat\mcW : [0, 1]\to [0, 1]$, defined by the relation
   $$ \widehat\mcW y = f(y), $$
   where
   \beq
       f(y) = \(1 - e^{-x y}\)^\al, \quad x>0,      \label{eq:mapp_W}
    \eeq    
   and  $\al=1/\be$. The mapping $\widehat\mcW$ has a trivial fixed point:
   $$ y_0 = 0, \qquad \forall x> 0.  $$
   Given $x>0$, the non-trivial fixed point of $ \widehat\mcW$ represents the value of the function, $y_\be(x)$, 
   that is a solution of Equation (\ref{eq:ybe_def}). %% described in
   
   The next statement describes the set of functions, $Y_\be=\{ y_\be(x)\}_{\be>1}, x\in \R^1_+$ and 
   delineates the probabilistic nature of the set $Y_\be$.
   
   Let $\mcD_+$ be the totality of all cumulative distribution functions of  non-negative and absolutely
    continuous random variables.
   \begin{proposition}\label{prop:cdf_y}
   For all $\be>1$ we have
   \beq
       Y_\be\subset \mcD_+.  \label{eq:subset}
     \eeq
     \end{proposition}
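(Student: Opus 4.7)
The plan is to verify the structural properties required of the cdf of a non-negative absolutely continuous random variable: the limits $y_\be(0)=0$ and $\lim_{x\to\iy}y_\be(x)=1$, monotonic right-continuity, and absolute continuity on $[0,\iy)$.

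Monotonicity, analyticity on $(0,\iy)$, and continuity at $0$ are already recorded in Section~\ref{sec:gLf}. The value $y_\be(0)=0$ is immediate from $x_\be(0)=0$ together with $y_\be=x_\be^{-1}$. For the limit at infinity I would invoke the inverse relation: since $x_\be(y)=-\log(1-y^\be)/y\to +\iy$ as $y\to 1^-$, and $x_\be$ maps $(0,1)$ bijectively and monotonically onto $(0,\iy)$, it follows that $y_\be(x)\to 1$ as $x\to\iy$.

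To establish absolute continuity I would construct the density explicitly. By the inverse function theorem, $y_\be$ is continuously differentiable on $(0,\iy)$ with $y_\be'(x)=1/x_\be'(y_\be(x))>0$. Reading the leading term off the series $x_\be(y)=\sum_{k=1}^\iy y^{\be k-1}/k$ gives $x_\be(y)\sim y^{\be-1}$ as $y\to 0^+$, whence $y_\be(x)\sim x^{1/(\be-1)}$ and
\[
    y_\be'(x)\;\sim\;\frac{1}{\be-1}\,x^{(2-\be)/(\be-1)} \qquad \text{as}\quad x\to 0^+.
\]
The exponent $(2-\be)/(\be-1)$ is strictly greater than $-1$ for every $\be>1$, so $y_\be'$ is integrable near the origin; integrability at infinity follows from boundedness of $y_\be$. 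A standard limiting argument exploiting continuity of $y_\be$ at $0$ then yields $y_\be(x)=\int_0^x y_\be'(t)\,\d t$ for every $x\ge 0$ and $\int_0^\iy y_\be'(t)\,\d t=1$, so $y_\be$ is absolutely continuous with probability density $y_\be'$, whence $y_\be\in\mcD_+$.

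The only genuinely computational step, and hence the main (modest) obstacle, is the small-$x$ analysis of $y_\be'$, which reduces to inverting the leading-order asymptotic of $x_\be$ near the origin; the remaining items are direct consequences of properties already recorded in Section~\ref{sec:gLf}.
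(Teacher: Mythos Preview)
Your argument is correct and follows the same three-step outline as the paper's proof (limits via the inverse function $x_\be$, monotonicity via $y_\be'>0$, then absolute continuity); the paper in fact disposes of item~(iii) with the single word ``obvious'', so your treatment is strictly more detailed. The explicit small-$x$ asymptotic for $y_\be'$ is valid but not actually needed: since $y_\be$ is $C^1$ on $(0,\infty)$, increasing, and continuous at $0$ with $y_\be(0)=0$, the identity $y_\be(x)=\int_0^x y_\be'(t)\,\d t$ (and hence integrability of $y_\be'$ near the origin) follows directly from the fundamental theorem of calculus on $[a,x]$ together with monotone convergence as $a\downarrow 0$.
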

     \begin{proof}
     We need to prove that
     \bei
     \item[i. ] The limits   
     \beq
        \lim_{x\to 0} y_\be(x)=0,  \label{eq:x_lim1}
     \eeq  
     and
     \beq
        \lim_{x\to \iy} y_\be(x)=1.  \label{eq:x_lim2}
     \eeq  
      \item[ii. ] The function $y_\be(x)$ is monotone.
      \item[iii. ] The function $y_\be(x)$ is absolutely continuous.
     \eei 
     
     {\it Proof of {\rm i}. }
   From the definition of the inverse function, $x_\be(y)$, we derive that 
   the limits of the inverse function satisfy the relations
   \beq
        \lim_{y\to 0} x(y)=0 \quad \text{and}\quad  \lim_{y\to 1} x(y)=+\iy.  \label{eq:x_lim}
     \eeq   
     Then from (\ref{eq:x_lim}) we obtain (\ref{eq:x_lim1}) and (\ref{eq:x_lim2}).
     
      {\it Proof of {\rm ii.} }  Monotonicity of $y_\be(x)$ immediately follows from the inequality
      %The derivative of the inverse function satisfies the inequality
      $$  \frac{\d y_\be(x)}{\d x} >0 \quad\text{for $x>0$}. $$
      %% and the function $y_\be(x)$ is monotone.
      
       {\it Proof of {\rm iii.} }  The last statement is obvious.
     
     Thus, the set of functions, $Y_\be$, is a subset of   $\mcD_+$. %% Proposition is thus proved.
     \end{proof}
   Let us now consider a useful computational scheme for the functions $y_\be(x)$.
   The inverse function, $x_\be(y)$, can be naturally used for the computation of  $y_\be(x)$
   %%to plot the graphs of the generalized Lambert function 
   for different values of the parameter $\be$, see Figure~\ref{fig:gGLf1}.
   More precisely, we take a finite set 
   $$ y_{\be}^{( k)} =\frac{k}{m}, \quad k= 1, 2, 3, \dots, m-1, $$
   for sufficiently large $m$ and compute the corresponding values of $x$,
   $$  x_k= -\frac{\log\(1-(y_{\be}^{( k)})^\be\)}{y_{\be}^{( k)}}. $$
   Hereafter, we shall call the linear interpolation between the nodes $\Bigl\{\(x_k, y_{\be}^{(k)}\)\Bigr\}_{k=1}^{m-1}$, the 
   {\it benchmark approximation} of the
   function $y_\be(x)$.
   \begin{figure}[th]
	\begin{center}
	\vspace{-150pt}
	\includegraphics[width=0.9\textwidth]{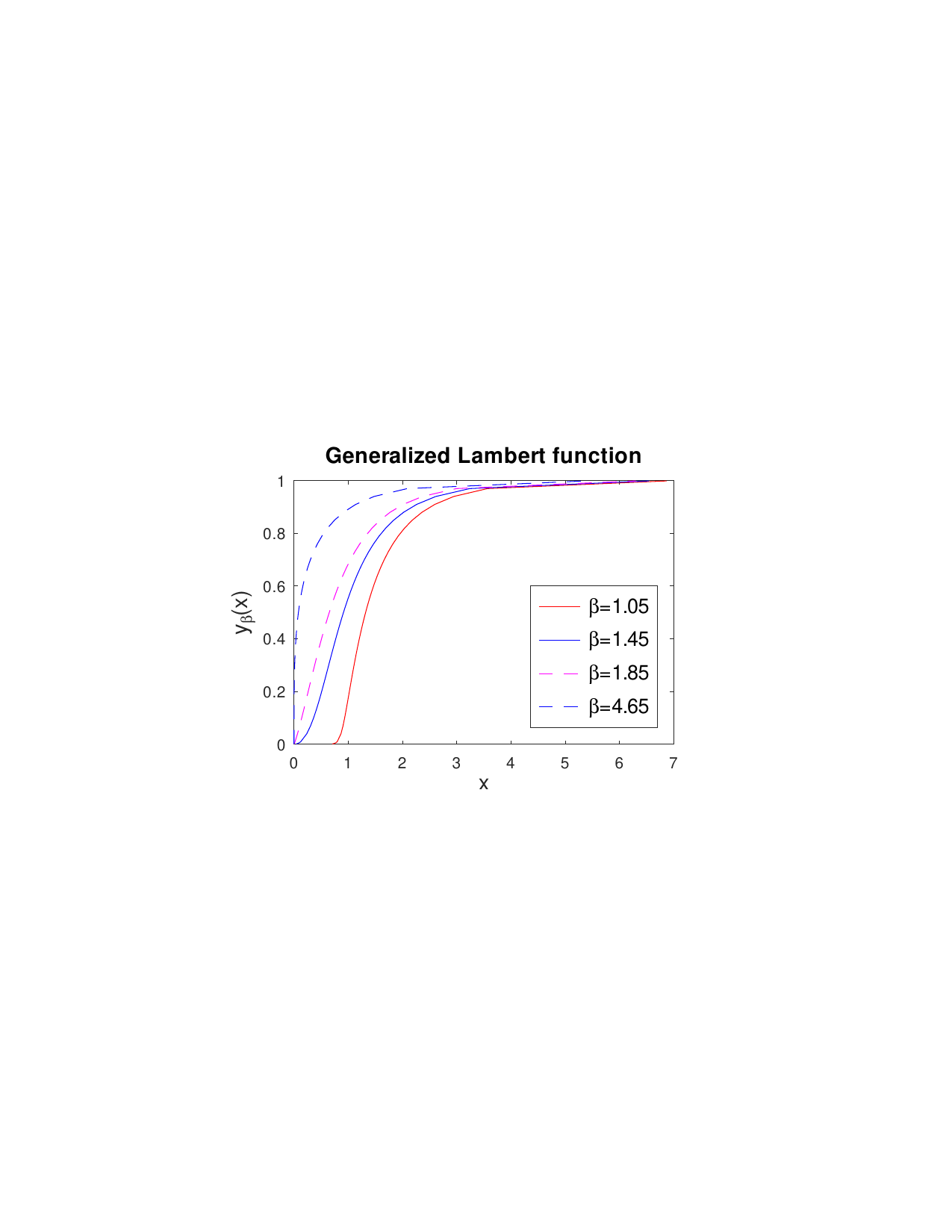}
	\vspace{-160pt}
	\end{center}
	 \caption{ Graph of the generalized Lambert function.}
  \label{fig:gGLf1}
\end{figure}

 \begin{theorem}\label{thm:conv_iter}
 The function $y_\be(x)$ admits the following  continued exponential representation:
 \beq
      y_\be(x) =  \dots\(1 -\exp\( -x\cdot \( 1 -\exp\(-x\cdot (1- e^{-x})^\al\) \)^\al \) \)^\al \dots  \label{eq:gL_cr}
 \eeq
 where $\al=\be^{-1}$.
 Formula~{\rm (\ref{eq:gL_cr})} should be understood as the limit of the iteration process %%following sequence of iterations:
 $$y_\be(x) \bydef \lim_{n\to \iy} y_n, $$
 where
 %\begin{eqnarray}
 $$
\left\{ 
\begin{array}{ll} 
    y_{n+1} &= \( 1 - e^{-x\cdot y_n}\)^\al, \quad n=1, 2, \dots ,  \\
    y_1 &= 1. 
    \end{array}\right.
 $$
    Given  $x>0$, the fixed point of the mapping {\rm(\ref{eq:mapp_W})} is a non-trivial solution of the 
    transcendental equation {\rm (\ref{eq:ybe_def})}.
   \end{theorem}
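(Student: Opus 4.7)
The plan is to interpret the nested expression on the right-hand side of (\ref{eq:gL_cr}) as the result of iterating the mapping $\widehat{\mcW}$ from (\ref{eq:mapp_W}), starting from $y_1 = 1$, and to establish convergence to the non-trivial fixed point by a monotone-sequence argument. First, I would record three structural facts about $f(y) \bydef (1 - e^{-xy})^\al$ on $[0,1]$: (a) $f$ maps $[0,1]$ into $[0,1]$ with $f(0)=0$ and $f(1) = (1-e^{-x})^\al < 1$; (b) $f$ is strictly increasing, since $y \mapsto 1 - e^{-xy}$ is strictly increasing for $x > 0$ and raising to the positive power $\al$ preserves monotonicity; and (c) by Proposition~\ref{prop:exist_sol} combined with Lemma~\ref{lem:sol_exists}, the only fixed points of $f$ in $[0,1]$ are the trivial point $0$ and the non-trivial point $y_\be(x) \in (0,1)$.

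The core step is to prove by induction on $n$ that $y_\be(x) < y_{n+1} < y_n$ for every $n \ge 1$. For $n = 1$ we have $y_2 = f(1) = (1-e^{-x})^\al < 1 = y_1$, and since $y_\be(x) < 1$ and $f$ is strictly increasing, $y_2 = f(1) > f(y_\be(x)) = y_\be(x)$. For the inductive step, applying the monotone map $f$ to the chain $y_\be(x) < y_{n+1} < y_n$ yields $y_\be(x) = f(y_\be(x)) < f(y_{n+1}) = y_{n+2} < f(y_n) = y_{n+1}$. Hence $\{y_n\}$ is strictly decreasing and bounded below by $y_\be(x)$, so it converges to some limit $y^\ast \in [y_\be(x), 1)$.

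To identify the limit, I would pass to the limit in the recursion $y_{n+1} = f(y_n)$ using the continuity of $f$; this gives $y^\ast = f(y^\ast)$, so $y^\ast$ is a fixed point of $\widehat{\mcW}$. Since $y^\ast \ge y_\be(x) > 0$, the uniqueness part of Proposition~\ref{prop:exist_sol} forces $y^\ast = y_\be(x)$, which is exactly the final assertion of the theorem. Unwinding the recursion starting from $y_1 = 1$ displays the $n$-th iterate as the $n$-fold nested exponential, so the limit coincides with the continued exponential written in (\ref{eq:gL_cr}). The main obstacle I anticipate is not analytic but notational: making sure the nested expression in (\ref{eq:gL_cr}) is read from the innermost $e^{-x}$ outward, so that the $n$-th truncation matches the iterate produced by the scheme with $y_1 = 1$; once this correspondence is stated precisely, all analytic content reduces to the monotonicity of $f$ and the uniqueness of the non-trivial fixed point already available from earlier results.
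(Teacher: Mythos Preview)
Your argument is correct and is essentially a streamlined variant of the paper's proof. Both routes use the strict monotonicity of $f$ (the content of the paper's Lemma~\ref{eq:lem_mon}) to obtain a monotone bounded sequence, and both must then explain why the limit is the non-trivial fixed point $y_\be(x)$ rather than the trivial one $0$. The paper handles this second step by proving a separate contraction lemma (Lemma~\ref{lem:contr_sol}): it computes $g(y)=f'(y)$, shows $g(y_\ast)\le\al<1$ and $g(y)\to+\iy$ as $y\to 0$, and then argues that the limit is forced to be $y_\ast$ because the mapping contracts near $y_\ast$ and repels near $0$. You bypass this entirely by carrying the lower bound $y_n>y_\be(x)$ through the induction, so that the limit is automatically $\ge y_\be(x)>0$ and uniqueness of the positive fixed point finishes the identification.

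Your route is more elementary---no derivative estimates are needed beyond the monotonicity of $f$---and is fully adequate for the theorem as stated, with $y_1=1$. What the paper's contraction lemma buys in return is the stronger conclusion that the iteration converges to $y_\be(x)$ from \emph{any} initial value $y_1\in(0,1]$, which is exactly what is used in the Corollary immediately following the theorem. Your argument as written covers only initial data with $y_1\ge y_\be(x)$; to recover the Corollary you would need a symmetric induction showing that if $0<y_1<y_\be(x)$ then $y_n$ is increasing and still bounded above by $y_\be(x)$, which goes through by the same monotonicity reasoning.
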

   \begin{remark}
         It is obvious that $\forall n=1, 2, \dots, \text{we have that }  0< y_n\le 1$.
   \end{remark}
    The proof of Theorem~\ref{thm:conv_iter} is based on the following two important Lemmas~\ref{eq:lem_mon} and
    \ref{lem:contr_sol}.
   \begin{lemma}[Monotonicity  of $\widehat\mcW$] \label{eq:lem_mon}
   Consider the sequence $\{ y_n\}_{n\ge 1}$ defined by the relation $y_{n+1}=\widehat\mcW y_n$ with an arbitrary initial value,
   $0<y_1\le 1$. Then we have
   \beq
    y_1\ge y_2 \Longleftrightarrow y_n\ge y_{n+1} \quad \forall \, n=2, 3, \dots; \label{eq:gte}
    \eeq
    \beq
    y_1 \le y_2 \Longleftrightarrow y_n\le y_{n+1} \quad \forall \, n=2, 3, \dots. \label{eq:lte}
    \eeq
    Moreover, if $0<y_1 \le z_1<1$ and the sequence $\{z_n\}_{n\ge 1}$  is defined by the same mapping, $\widehat\mcW$,
    $z_{n+1}= \widehat\mcW z_n$, then
    $$ y_n\le z_n \quad n=1, 2, \dots. $$
    \end{lemma}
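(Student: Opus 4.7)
The plan is to deduce every claim in this lemma from a single underlying fact: the map $f(y)=(1-e^{-xy})^\al$ defining $\widehat\mcW$ is strictly increasing in $y$ on $[0,1]$ for each fixed $x>0$. This is immediate from the chain rule, since $y\mapsto 1-e^{-xy}$ has derivative $xe^{-xy}>0$ and sends $[0,1]$ into $[0,1-e^{-x}]\subset[0,1)$, while $u\mapsto u^\al$ is strictly increasing on $[0,1)$ because $\al=1/\be\in(0,1)$. Consequently $\widehat\mcW$ is strictly order-preserving on $(0,1]$, and the iterates $\{y_n\}_{n\ge 2}$ all lie in $\bigl[0,(1-e^{-x})^\al\bigr]\subset[0,1)$, so that $f$ is defined and strictly monotone at every point we shall compare.

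The comparison statement for two trajectories is the simplest consequence. If $0<y_1\le z_1<1$, strict order preservation gives $y_2=f(y_1)\le f(z_1)=z_2$, and a straightforward induction on $n$ yields $y_n\le z_n$ for every $n\ge 1$.

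For the biconditional chains I would exploit the single-step equivalence produced by strict monotonicity (hence injectivity) of $f$:
\[
 y_n \le y_{n+1} \Longleftrightarrow f(y_n)\le f(y_{n+1}) \Longleftrightarrow y_{n+1}\le y_{n+2},
\]
together with the analogous statement for $\ge$. Iterating this equivalence forward from $n=1$ transmits the sign of $y_1-y_2$ to every subsequent consecutive pair: $y_1\le y_2$ forces $y_n\le y_{n+1}$ for all $n\ge 2$, and conversely the case $n=2$ of the right-hand side, $y_2\le y_3$, yields $f(y_1)\le f(y_2)$ and hence $y_1\le y_2$ by strict monotonicity. This establishes (\ref{eq:lte}); the mirror argument with all inequalities reversed gives (\ref{eq:gte}).

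There is no real obstacle to overcome: the entire lemma is a direct consequence of the strict monotonicity of $f$. The only point that requires a brief check is that the iterates do not escape the interval on which $f$ is defined and strictly monotone, which is automatic from $f\bigl([0,1]\bigr)\subset[0,1)$ and the hypothesis $0<y_1\le 1$.
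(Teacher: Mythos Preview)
Your proof is correct and follows essentially the same route as the paper's own argument: both derive everything from the strict monotonicity of $f(y)=(1-e^{-xy})^\al$ on $[0,1]$ and then propagate the initial inequality by induction. Your write-up is in fact slightly more explicit than the paper's in handling the converse implication of the biconditionals (using injectivity of $f$) and in checking that iterates remain in the domain, but the underlying idea is identical.
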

    \begin{proof}[Proof of Lemma]
    The proof of the lemma follows immediately from the monotonicity of the function
    $$f(y) = \(1 - e^{-xy}\)^\al, \quad 0<\al<1, \,\, x>0, $$
    that we obtain from the inequality $\frac{\d f(y)}{\d y}>0$ for $x>0$ and $ y\in(0, 1]$. %% where $ g(y)=f^\prime(y). $
    Suppose that $1\ge y_1>y_2>0$. Let us prove that $y_2>y_3$. But $y_2^\be = 1 - e^{-x y_1}$ and
    $y_3^\be = 1 - e^{-x y_2}$. Since $y_1>y_2$, we obtain that
    $ y_2^\be > y_3^\be$. Since $\be>1$, we derive that $y_2>y_3$. 
    
    A similar reasoning allows us to conclude that if
    $y_n > y_{n+1}$ then $y_{n+1}> y_{n+2}$, and we obtain that the inequality $y_1>y_2$
    implies that $y_n>y_{n+1}$ for all integer $n\ge 2$. 
    
    The dual case, $0<y_1< y_2\le 1$ is analogous to the previous one: the latter inequality
    implies that $y_n < y_{n+1}$ for all integer $n\ge 2$.
    
    The proof of the last statement of the Lemma is similar to the proof of the first statement.
    \end{proof}
    %% Denote by $g(y)$ the first derivative of the function $f(y)$: $ g(y)=f^\prime(y). $
    Next, denote $$ g(y)\bydef \frac{\d f(y)}{\d y}. $$
    \begin{lemma}[Contraction at fixed point] \label{lem:contr_sol}
    $ $
    
    \bei
    \item[i.] The fixed point, $y_\ast(x)$, of the mapping  $\widehat\mcW$ satisfies the relation
    \beq
        0<g(y_\ast)\le \al<1 \qquad\forall x>0. \label{eq:gY}
    \eeq
    \item[ii.] The  derivative, $g(y)$, diverges at $0$:
    $$\lim_{y\to 0}g(y)=+\iy. $$ 
    \item[iii.] The function $g(y)$ is monotonically decreasing. 
    \item[iv.] For $y>y_\ast$, $g(y)<\al$.
    \eei
    \end{lemma}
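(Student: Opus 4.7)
The plan is to prove the four parts in the order (iii), (ii), (i), (iv), after first writing $g(y)$ in a workable form. Differentiating $f(y) = (1 - e^{-xy})^{\al}$ gives
\[
g(y) = \al x \, e^{-xy} \cdot (1 - e^{-xy})^{\al - 1}, \qquad y>0,
\]
and this is the expression I would use throughout.

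For part (iii) I would simply compute $g'(y)$. After factoring out the positive quantity $\al x^2 e^{-xy} (1 - e^{-xy})^{\al - 2}$, the sign of $g'(y)$ is governed by $\al e^{-xy} - 1$, which is strictly negative since $\al \in (0,1)$ and $e^{-xy} < 1$ for $x,y>0$. Thus $g$ is strictly decreasing on $(0,\iy)$. Part (ii) then follows immediately from the form of $g$: as $y \to 0^+$, $1 - e^{-xy} \sim xy$, so $(1 - e^{-xy})^{\al - 1} \sim (xy)^{\al - 1} \to +\iy$ since $\al - 1 < 0$, while $\al x e^{-xy} \to \al x > 0$.

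For part (i), the positivity $g(y_\ast) > 0$ is clear since each factor is positive. For the upper bound I would use the fixed-point identity $y_\ast^\be = 1 - e^{-xy_\ast}$ to eliminate the exponential, obtaining
\[
g(y_\ast) \;=\; \al x \cdot \frac{1 - y_\ast^\be}{(y_\ast^\be)^{1 - \al}} \;=\; \al x \cdot \frac{1 - y_\ast^\be}{y_\ast^{\be - 1}}.
\]
Then invoking the inverse-function formula (\ref{eq:inv_func}), $x = -\log(1 - y_\ast^\be)/y_\ast$, and setting $u := y_\ast^\be \in (0,1)$, the desired inequality $g(y_\ast) \le \al$ reduces to the one-variable statement
\[
-\,(1 - u)\log(1 - u) \;\le\; u, \qquad u \in (0,1).
\]
This is immediate from $-\log(1-u) = \sum_{k \ge 1} u^k/k \le \sum_{k \ge 1} u^k = u/(1-u)$, with strict inequality for $u \in (0,1)$. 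Hence in fact $g(y_\ast) < \al$, which is slightly stronger than claimed.

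Part (iv) is then a direct corollary: by (iii), $g$ is strictly decreasing, so $y > y_\ast$ gives $g(y) < g(y_\ast) \le \al$. The only step requiring any real thought is part (i); the obstacle there is recognising that the two relations $y_\ast^\be = 1 - e^{-xy_\ast}$ and $xy_\ast = -\log(1 - y_\ast^\be)$ together eliminate both $x$ and $y_\ast$ in favour of the single variable $u = y_\ast^\be$, after which the bound collapses to the classical series inequality for $-\log(1-u)$.
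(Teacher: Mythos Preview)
Your proof is correct and follows essentially the same plan as the paper's: compute $g(y)$, differentiate for (iii), read off (ii), use the fixed-point relation for (i), and derive (iv) from (i)$+$(iii). The one cosmetic difference is in part (i): the paper substitutes $t=xy_\ast$ to obtain the tidy form $g(y_\ast)=\al\,t/(e^{t}-1)$ and then invokes $e^{t}-1>t$, whereas you substitute $u=y_\ast^\be$ and reduce to $-(1-u)\log(1-u)<u$ via the power series for $-\log(1-u)$; these are the same inequality under the change of variable $t=-\log(1-u)$, so neither route is genuinely new.
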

    \begin{proof}[Proof of Lemma~{\rm \ref{lem:contr_sol}}]
    The function, $g(y)$, satisfies the relation
    $$ g(y) = \al \cdot x\cdot \(1-e^{-yx}\)^{\al-1} e^{-yx}, $$
    and
    $$ g^\prime(y)=-\al x^2\cdot e^{-yx} \cdot  \(1 - e^{-x\cdot y_\ast}\)^{\al-1} - \al (1-\al) \cdot 
    e^{-yx} \cdot  \(1 - e^{-x\cdot y_\ast}\)^{\al-1} <0. $$
    The latter inequality implies statement ({\rm{iii}}). 
    
    Denote by $y_\ast(x)$ the fixed point of the mapping $\widehat\mcW$. Then we have
    % The fixed point of the mapping, $y_\ast=y_\ast(x)$, is defined by the equation
    $$ y_\ast = \(1 - e^{-x\cdot y_\ast}\)^\al, \quad x>0. $$
    Therefore, %%n we have
    \begin{eqnarray*}
    g(y_\ast) &=& \al \cdot x\cdot \(1 - e^{-x\cdot y_\ast}\)^{\al-1} \cdot e^{-x\cdot y_\ast} \\
        &=& \al \cdot x\cdot \(1 - e^{-x\cdot y_\ast}\)^{\al} \cdot \frac{e^{-x\cdot y_\ast}}{1-  e^{-x\cdot y_\ast}}\\
        &=& \al \cdot x\cdot y_\ast \cdot  \frac{e^{-x\cdot y_\ast}}{1-  e^{-x\cdot y_\ast}}
    \end{eqnarray*}
    Denote $t\bydef x\cdot y_\ast $. Obviously, $t>0$. Notice that for $t>0$,
    $$ e^t -1 >t.$$
    Then we obtain that $g(y_\ast)$ satisfies the inequality
    $$ g(y_\ast) = \al \cdot \frac{t}{e^t -1}\le \al< 1. $$
    Statement ({\rm{i}}) is proved.
    Therefore, the function $f$ is a contraction near the fixed point $y_\ast$, as was to be proved.
    The proof of  statements  ({\rm{ii}}) and ({\rm{iv}}) is straightforward. %% of the lemma are obvious.
    \end{proof}
        
  \begin{proof}[Proof of Theorem~{\rm \ref{thm:conv_iter}}]  Fix $x> 0$ and choose $y_1(x) = 1 - e^{-x}$.  
    It is not difficult to see that the sequence $\{ y_n\}_{n\ge 1}$ satisfies the inequality $0\le y_n\le 1$ for $n\ge 2$ if
    $y_1\in (0, 1]$. Monotonicity and boundedness of the sequence $\{y_n\}_{n\ge 1}$ implies that there exists the limit
    $$ y_\ast(x)=\lim_{n\to\iy} y_n(x). $$ This limit
     does not depend on the choice of $y_1$  because convergence to $y_\ast(x)$ is monotone $\forall x\ge 0$, 
    and near the fixed point the  mapping is a contraction. 
  The limit, $y_\ast(x)$, satisfies  (\ref{eq:gL_cr}) and we obtain that
  $y_\ast(x) = y_\be(x)$ for all $x>0$. Theorem~\ref{thm:conv_iter} is thus proved.
  \end{proof}
  
   \begin{figure}[htp] % float placement:
    \vspace{-180pt}
   \centering 
   %\hskip{-1.5cm}
   \includegraphics[scale=0.68,angle=0]{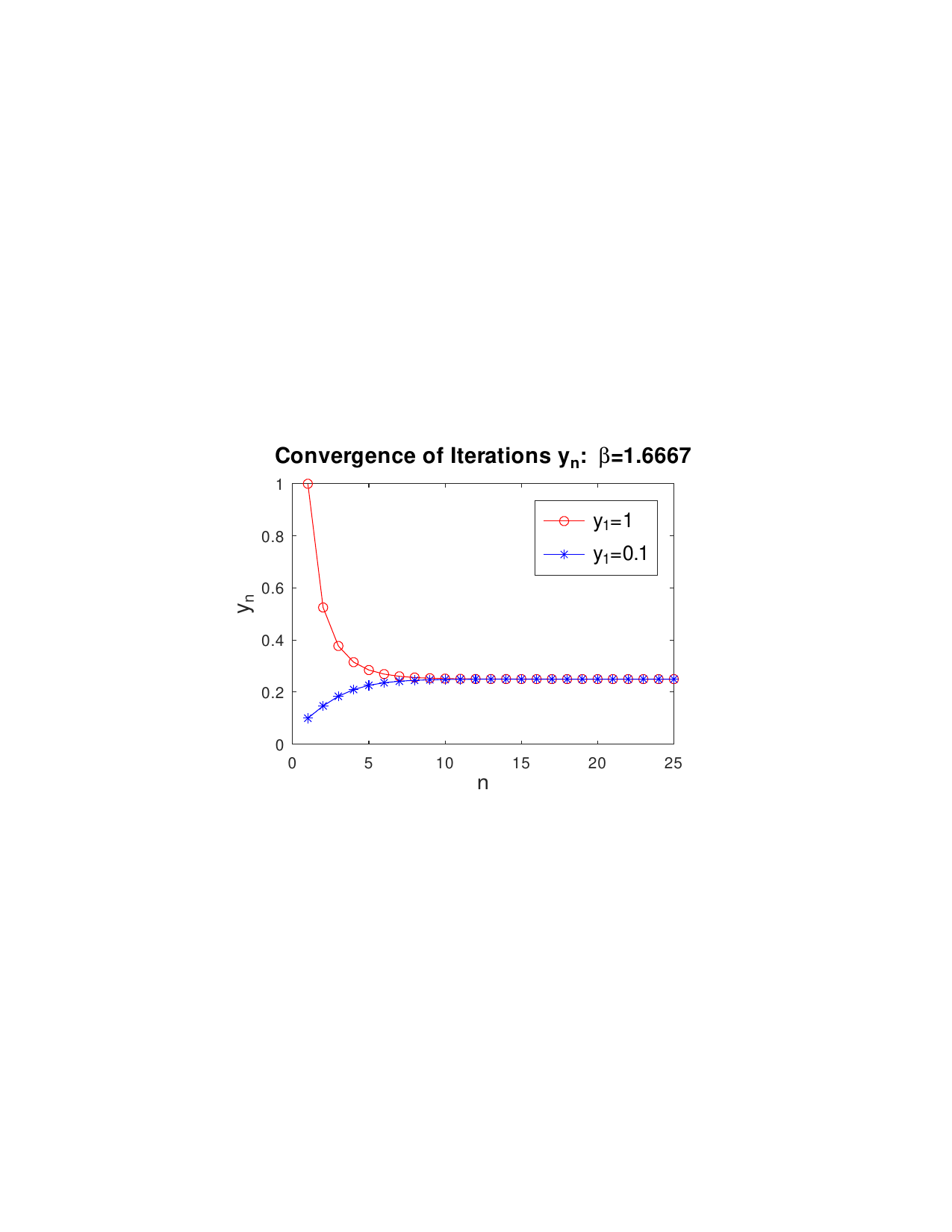}
    \vspace{-210pt}
  \caption{ Iterations of the mapping $\widehat\mcW$.}
  \label{fig:iterGL2}
\end{figure}
Convergence of iterations, $y_n(x)$, is illustrated in Figure~\ref{fig:iterGL2}. The integer parameter, $n$, is 
the iteration index, the argument $x=1.287$ in Figure~\ref{fig:iterGL2}.

The values of the function, $y_\be(x)$, also depend on the parameter, $\be>1$. If parameter $\be$ is not fixed, 
we have a function of two arguments, $y_\be(x)$.
\begin{proposition}\label{prop:monot_beta}
 The function $y_\be(x)$ is a monotone function of  $\be, (\be>1)$.
\end{proposition}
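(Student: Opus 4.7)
The plan is to pass to the inverse function $x_\beta(y)$ from (\ref{eq:inv_func}), which is an explicit elementary expression in $\beta$, and differentiate with respect to $\beta$ for \emph{fixed} $y\in(0,1)$. The payoff is that monotonicity in $\beta$ at fixed $y$ of the inverse transfers, thanks to the strict monotonicity of $x_\beta(\cdot)$ proved in Section~\ref{sec:gLf}, into monotonicity in $\beta$ at fixed $x$ of $y_\beta(\cdot)$.

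Concretely, I would compute, for $y\in(0,1)$ and $\beta>1$,
\[
\frac{\partial x_\beta(y)}{\partial \beta} \;=\; \frac{\partial}{\partial \beta}\!\left(-\frac{\log(1-y^\beta)}{y}\right) \;=\; \frac{y^\beta\,\log y}{y\,(1-y^\beta)},
\]
and then observe that each factor has a definite sign: $y^\beta>0$, $1-y^\beta>0$, and $\log y <0$ because $y\in(0,1)$. Hence $\partial_\beta x_\beta(y)<0$ for every $y\in(0,1)$ and every $\beta>1$, i.e.\ $x_\beta(y)$ is strictly decreasing in $\beta$ at fixed $y$.

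To convert this into the desired statement, fix $x>0$ and pick $1<\beta_1<\beta_2$, writing $y_i\bydef y_{\beta_i}(x)$, so that by definition $x_{\beta_i}(y_i)=x$. From the step above, $x_{\beta_2}(y_1)<x_{\beta_1}(y_1)=x=x_{\beta_2}(y_2)$, and since $x_{\beta_2}$ is strictly increasing on $(0,1)$ (as shown in Section~\ref{sec:gLf}), it follows that $y_1<y_2$. Therefore $y_\beta(x)$ is strictly increasing in $\beta$ for every $x>0$.

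There is no real obstacle: the calculation is elementary, and the only point requiring a touch of care is the sign bookkeeping and the translation from ``$x_\beta(y)$ decreases in $\beta$ at fixed $y$'' to ``$y_\beta(x)$ increases in $\beta$ at fixed $x$''—handled cleanly via the strict monotonicity of $x_\beta$ in its argument.
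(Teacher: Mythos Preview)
Your proof is correct and follows essentially the same approach as the paper's: both pass to the explicit inverse $x_\beta(y)$, observe it is decreasing in $\beta$ at fixed $y$, and then use strict monotonicity of $x_\beta(\cdot)$ in its argument to deduce that $y_\beta(x)$ is increasing in $\beta$. You simply make explicit what the paper leaves implicit, namely the computation of $\partial_\beta x_\beta(y)$ and the clean two-point comparison $x_{\beta_2}(y_1)<x_{\beta_1}(y_1)=x=x_{\beta_2}(y_2)$ that effects the inversion.
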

\begin{proof} Consider the inverse function $x_\be(y)$ corresponding to the  function  $y_\be(x)$. 
From the definition of the inverse function
(\ref{eq:inv_func}) it follows that $x_\be(y)$ is monotonically decreasing function of the parameter $\be$.
Then we immediately obtain that the inverse function to $x_\be(y)$  is monotonically increasing function of $\be$.
Proposition~\ref{prop:monot_beta} is thus proved.
\end{proof}
  \begin{figure}[htp] % float placement:
    \vspace{-200pt}
   \centering 
   %\hskip{-1.5cm}
   \includegraphics[scale=0.68,angle=0]{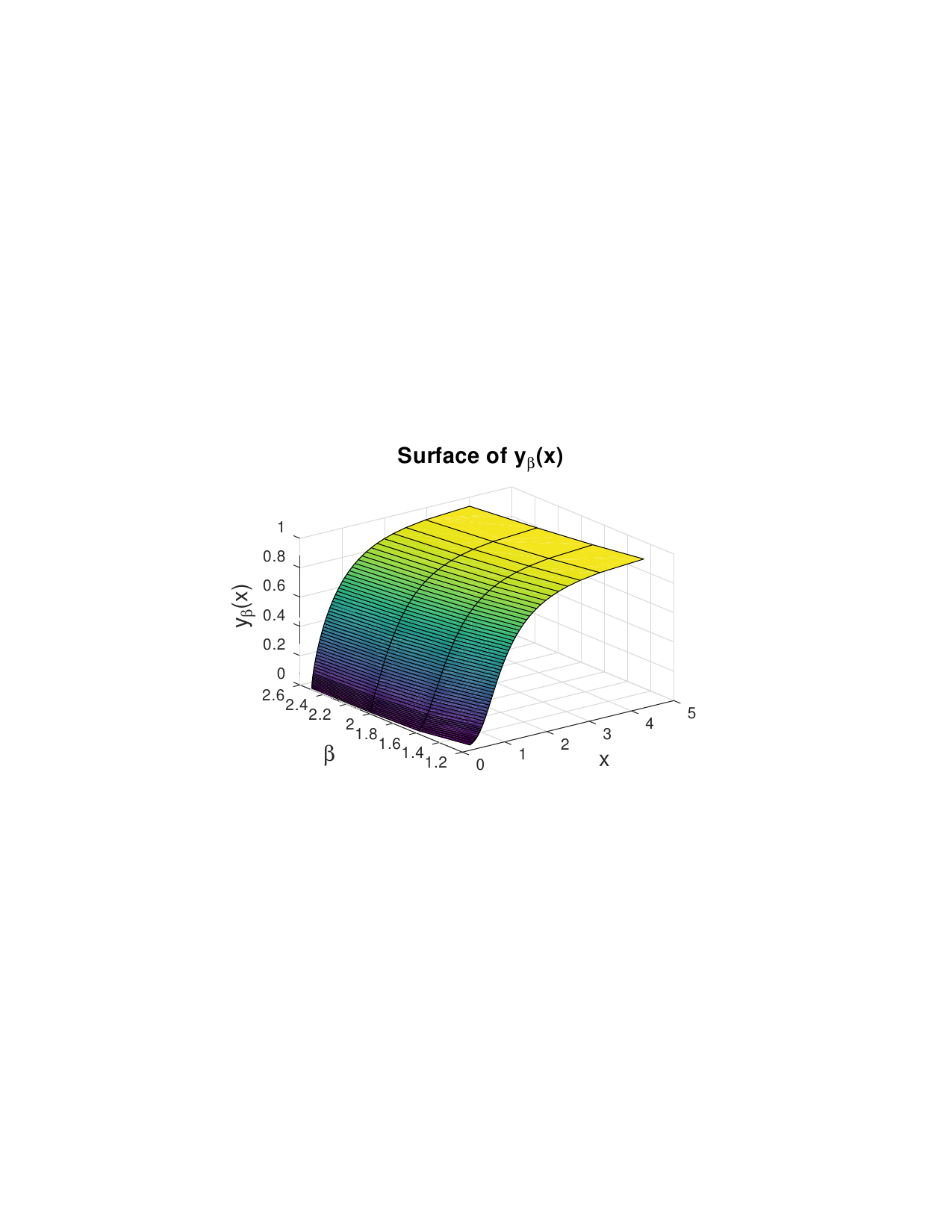}
    \vspace{-210pt}
  \caption{Surface of the function $y_\be(x)$.}
  \label{fig:surfY}
\end{figure}

Since the limit of iterations, $\{y_n\}_{n\ge 1}$, does not depend on the initial value, $y_1$, we obtain the following
\begin{corollary}
For any $\theta\in(0, 1]$ the function $y_\be(x)$ admits the following  continued exponential representation:
 \beq
     y_\be(x) =  \dots\(1 -\exp\( -x\cdot \( 1 -\exp\(-x\cdot (1- e^{-x\theta})^\al\) \)^\al \) \)^\al \dots  \label{eq:gL_cem}
 \eeq
 where $\al=\be^{-1}\in (0, 1)$.
\end{corollary}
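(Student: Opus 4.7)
The plan is to recognize that formula~(\ref{eq:gL_cem}) is precisely the unfolded representation of the iteration $y_{n+1}=\widehat\mcW y_n = (1-e^{-xy_n})^\al$ started from the initial value $y_1=\theta\in(0,1]$ (so that $y_2=(1-e^{-x\theta})^\al$, which is the innermost term in~(\ref{eq:gL_cem})). Thus the corollary amounts to the assertion that for every admissible $\theta$ the iteration of $\widehat\mcW$ converges to $y_\be(x)$, i.e.\ that the limit in Theorem~\ref{thm:conv_iter} is independent of the initial value.

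First, I would verify well-definedness and boundedness: for any $y\in(0,1]$ and $x>0$, $f(y)=(1-e^{-xy})^\al\in(0,1)$, so inductively $y_n\in(0,1]$ for all $n$. Next, by Lemma~\ref{eq:lem_mon}, the sequence $\{y_n\}_{n\ge 1}$ is monotone, with direction determined by the sign of $y_2-y_1$. Using Lemma~\ref{lem:contr_sol} together with the fact that $f(0)=0$, one knows that $f(y)>y$ on $(0,y_\be(x))$ and $f(y)<y$ on $(y_\be(x),1]$; hence the sequence is monotone increasing when $\theta<y_\be(x)$, monotone decreasing when $\theta>y_\be(x)$, and stationary when $\theta=y_\be(x)$. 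In any case it is monotone and lies in $(0,1]$, so it has a limit $y_\infty(x)$.

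To identify the limit I would pass to the limit in $y_{n+1}=f(y_n)$ using the continuity of $f$, obtaining that $y_\infty(x)$ is a fixed point of $\widehat\mcW$. By Proposition~\ref{prop:exist_sol}, there are exactly two fixed points, $0$ and $y_\be(x)$, so it suffices to rule out $y_\infty(x)=0$. If $\theta<y_\be(x)$, monotone convergence from below combined with $y_n\ge\theta>0$ forces $y_\infty(x)\ge\theta>0$. If $\theta\ge y_\be(x)$, monotone convergence from above combined with $f(y)\ge y_\be(x)$ on $[y_\be(x),1]$ (again by Lemma~\ref{lem:contr_sol}(i) and strict monotonicity of $g$) yields $y_n\ge y_\be(x)>0$. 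In either case $y_\infty(x)=y_\be(x)$, which is the desired conclusion.

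The main obstacle is a genuinely minor one, namely excluding the trivial fixed point $y=0$ when $\theta$ is small; the dichotomy above handles this, and beyond it the argument is essentially a specialization of the proof of Theorem~\ref{thm:conv_iter}, in which the specific choice $y_1=1$ (or $y_1=1-e^{-x}$) is replaced by an arbitrary $\theta\in(0,1]$ without affecting any of the monotonicity or contraction estimates.
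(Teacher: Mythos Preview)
Your proposal is correct and follows essentially the same approach as the paper: the paper does not give a separate proof of the corollary but simply notes, immediately after proving Theorem~\ref{thm:conv_iter}, that the limit of the iterations does not depend on the choice of $y_1$, whence~(\ref{eq:gL_cem}) follows for any $\theta\in(0,1]$. Your argument is a fuller spelling-out of precisely this independence (monotone, bounded, hence convergent to a fixed point; then exclude the trivial fixed point), so it matches the paper's reasoning in substance while being more explicit. One cosmetic remark: your lower bound $y_n\ge y_\be(x)$ in the case $\theta\ge y_\be(x)$ follows most directly from the monotonicity of $f$ itself (so $f(y)\ge f(y_\be(x))=y_\be(x)$), rather than from Lemma~\ref{lem:contr_sol}(i) and properties of $g$.
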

  
  \section{Asymptotic behaviour of  function $y_\be(x)$.}\label{sec:asympt_gLf}
  In this section, we consider the asymptotic behaviour of $y_\be(x)$ for both small  and  large values of argument $x$.
  First,  we construct the following asymptotic expansion for  function $y_\be(x)$ in the right-hand 
  neighbourhood of $0$.
  \begin{proposition}
  The function, $y_\be(x)$, has the following asymptotic on the lower tail: %%  $x\to 0$:
  \beq 
         y_\be(x) = \frac{ x^{\frac{1}{\be-1}}}{  1 + \frac{ x^{\frac{\be+1}{\be -1}}}{2(\be-1)}  } + o\(x^{\frac{\be+2}{\be-1}}\)
         \quad\text{as $x\downarrow 0$}.  \label{eq:asymp_y}
   \eeq   
  \end{proposition}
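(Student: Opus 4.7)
My plan is to exploit the absolutely convergent series representation of the inverse function $x_\be(y)$ given in (\ref{eq:ser_x_be}),
$$ x \;=\; y^{\be-1} + \tfrac{1}{2}y^{2\be-1} + \tfrac{1}{3}y^{3\be-1} + \cdots, \qquad 0 \le y < 1, $$
and to invert this relation asymptotically as $x \downarrow 0$ by a standard two-step bootstrapping argument.

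First, I would identify the leading-order behaviour. From (\ref{eq:x_lim1}) we know $y_\be(x) \downarrow 0$ as $x \downarrow 0$, so the bracketed factor in $x = y^{\be-1}\bigl(1 + y^\be/2 + y^{2\be}/3 + \cdots\bigr)$ tends to $1$; hence $y^{\be-1} = x(1 + o(1))$, which yields $y_\be(x) \sim x^{1/(\be-1)}$ as $x \downarrow 0$.

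Second, I would substitute this leading estimate back into the series to extract the first correction. Truncating after two terms produces $y^{\be-1} = x\bigl(1 + \tfrac{1}{2}y^\be + O(y^{2\be})\bigr)^{-1}$; using $y^\be \sim x^{\be/(\be-1)}$ from the previous step and raising to the $1/(\be-1)$-th power via $(1+u)^{1/(\be-1)} = 1 + u/(\be-1) + O(u^2)$, I expect an expansion of the form $y_\be(x) = x^{1/(\be-1)}\bigl(1 - c_\be\, x^{\be/(\be-1)} + \cdots\bigr)$ with $c_\be = 1/(2(\be-1))$. The rational form appearing in (\ref{eq:asymp_y}) is then the natural way to encode this first correction compactly, with the difference between the rational and additive forms being absorbed into the stated $o$-term.

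The main technical issue is the careful accounting of remainders. One needs to verify that the tail of the series (\ref{eq:ser_x_be}) from $k \ge 3$ onwards, together with the quadratic and higher terms in the Taylor expansion of $(1+u)^{1/(\be-1)}$, contribute at an order genuinely negligible compared with the stated error. This is routine once the leading-order estimate $y_\be(x) = x^{1/(\be-1)}(1+o(1))$ is in hand, because each subsequent term in (\ref{eq:ser_x_be}) carries an additional factor of $y^\be \sim x^{\be/(\be-1)}$ and each successive Taylor remainder of $(1+u)^{1/(\be-1)}$ carries an additional factor of $u$; no other part of the argument is subtle.
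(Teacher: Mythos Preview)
Your proposal is correct and takes essentially the same approach as the paper: a two-step bootstrap that first extracts the leading order $y_\be(x)\sim x^{1/(\be-1)}$ and then feeds it back to obtain the first-order correction $c_\be=1/(2(\be-1))$, which is repackaged in rational form. The only cosmetic difference is that you expand the inverse via the logarithmic series $x=\sum_{k\ge1} y^{\be k-1}/k$ (so the small parameter is $y^\be$), whereas the paper expands $1-e^{-xy}$ directly (so the small parameter is $xy$); since $xy\sim y^\be$ to leading order these give the same recursion and the same result.
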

  \begin{proof}
  We have %% from (\ref{eq:gLf_def})
  $$  1 - e^{-xy_\be} = xy_\be - \frac12 x^2 y_\be^2 +o(x^2). $$
  %% Then we obtain from 
  Let us rewrite Equation~(\ref{eq:ybe_def}) as
  $$
    y^\be = 1 - e^{-xy},    \quad x>0,   
 $$
 Then we obtain from the latter equation that
  \beq 
      \(y_\be(x)\)^{\be -1} = x\cdot\(1-\frac12 x y_\be(x) + o(x)\).   \label{eq:y_bet} 
 \eeq
 Let $\ga\bydef \be-1$. Since $\be>1$, the parameter $\ga$ is positive and hence, we get for 
 small $t$ that   $ \( 1 - t\)^{\ga}= 1- \ga t + o(t). $
  
  From (\ref{eq:y_bet}) and the latter asymptotic formula we obtain that $y_\be(x)$ satisfies the following relation:
  $$ y_\be(x) = x^{\frac{1}{\be-1}}\( 1 -\frac{1}{2(\be-1)} x y_\be(x) + o\(x^{\frac{1}{\be-1}}\)\).  $$
  Then we derive that
  $$ y_\be(x)\cdot \( 1 + \frac{ x^{\frac{\be}{\be -1}}}{2(\be-1)} \) =  x^{\frac{1}{\be-1}} +  o\(x^{\frac{2}{\be-1}}\), $$
  and, finally,
  $$ y_\be(x) = \frac{ x^{\frac{1}{\be-1}}}{  1 + \frac{ x^{\frac{\be}{\be -1}}}{2(\be-1)}  } +  o\(x^{\frac{2}{\be-1}}\). $$
  The proposition is thus proved.
  \end{proof}
  The asymptotics (\ref{eq:asymp_y}) of $y_\be(x)$, as $x\!\to\! 0$, is just a starting point. It can be improved as follows:
   \begin{proposition}\label{prop:asympt_y1}
  The function, $y_\be(x)$ admits  the following  asymptotic expansion as   $x\dar 0$:
  {\small
  \beq 
         y_\be(x) =  x^{\frac{1}{\be-1}} \cdot\(  1 - \frac{1}{2(\be-1)}  x^{\frac{\be}{\be-1}}  +  
         \frac{\be+8}{24(\be-1)^2}  x^{\frac{2\be}{\be-1}}  -  \frac{\be+3}{12(\be-1)^3}  x^{\frac{3\be}{\be-1}} +\dots   \)
          \label{eq:asymp_y1}
   \eeq   
   }
  \end{proposition}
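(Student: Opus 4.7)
The plan is to extend the bootstrap argument from the preceding proposition by carrying more terms through the expansion. Starting from Equation~(\ref{eq:ybe}), I divide by $y_\be$ to obtain
\[ y_\be^{\be-1} = x \sum_{k=0}^{\iy} \frac{(-x y_\be)^k}{(k+1)!}. \]
Then I introduce the scaling $y_\be(x) = x^{1/(\be-1)} u(z)$ with $z \bydef x^{\be/(\be-1)}$; setting $\ga \bydef 1/(\be-1)$, the implicit relation becomes
\[ u^{\be-1} = 1 - \tfrac{z u}{2} + \tfrac{(zu)^2}{6} - \tfrac{(zu)^3}{24} + O(z^4), \qquad\text{equivalently}\qquad u = (1+w)^{\ga}, \]
where $w \bydef -\tfrac{zu}{2} + \tfrac{(zu)^2}{6} - \tfrac{(zu)^3}{24} + O(z^4)$. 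Observe that $u \to 1$ as $z \dar 0$ by the first-order asymptotic (\ref{eq:asymp_y}).

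Next, I posit a formal expansion $u(z) = 1 + a_1 z + a_2 z^2 + a_3 z^3 + O(z^4)$ and match coefficients. Using the binomial series $(1+w)^{\ga} = 1 + \ga w + \binom{\ga}{2} w^2 + \binom{\ga}{3} w^3 + O(w^4)$, I compute $w$, $w^2$, $w^3$ up to order $z^3$ and collect terms by their power of $z$. Matching the coefficient of $z$ recovers $a_1 = -\ga/2 = -1/(2(\be-1))$, in agreement with the preceding proposition; matching the coefficients of $z^2$ and $z^3$ yields two linear equations whose unique solutions, after rewriting rational expressions in $\ga$ in terms of $\be$, give precisely $a_2 = (\be+8)/(24(\be-1)^2)$ and $a_3 = -(\be+3)/(12(\be-1)^3)$.

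To upgrade this formal calculation to a genuine asymptotic expansion, I would invoke the analytic implicit function theorem applied to $G(z,u) \bydef u^{\be-1} - \sum_{k=0}^{\iy} (-zu)^k/(k+1)!$. Since $\partial_u G(0,1) = \be - 1 \ne 0$, there exists a unique analytic branch $u(z)$ near $z=0$ with $u(0) = 1$, and this branch must coincide with $y_\be(x)/x^{1/(\be-1)}$ because both are continuous at the origin, equal $1$ there, and satisfy the same implicit equation. Consequently, the formal series is the Taylor series of an honest analytic function, and truncation at any order produces an asymptotic expansion in the usual sense.

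The main obstacle is the algebraic bookkeeping at order $z^3$: one has to collect contributions to the $z^3$-coefficient coming from $\ga w$, $\binom{\ga}{2} w^2$, and $\binom{\ga}{3} w^3$, including cross-terms that arise when expanding $(zu)^k = z^k(1 + a_1 z + \cdots)^k$. The signs alternate, the $\ga$-dependent binomial coefficients mix nontrivially with the rational coefficients $1/(k+1)!$ from the series for $(1-e^{-t})/t$, and the final simplification back to the clean $\be$-forms $(\be+8)/(24(\be-1)^2)$ and $-(\be+3)/(12(\be-1)^3)$ requires some care; but once the framework above is in place, the remaining work is purely mechanical.
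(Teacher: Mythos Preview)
Your approach is essentially the same as the paper's: both introduce the scaling $y_\be(x)=x^{1/(\be-1)}u(z)$ with $z=x^{\be/(\be-1)}$, arrive at the implicit relation $u^{\be-1}=1-\tfrac{zu}{2}+\tfrac{(zu)^2}{6}-\cdots$, posit a power-series ansatz $u=1+a_1z+a_2z^2+\cdots$, and match coefficients recursively. The paper substitutes the ansatz directly into $u^{\be-1}$; you instead take the $\ga$-th root and expand $(1+w)^{\ga}$ binomially---a cosmetic difference only. Where you genuinely go further is the appeal to the analytic implicit function theorem at $(z,u)=(0,1)$ with $\partial_u G(0,1)=\be-1\neq 0$: the paper gives only the formal coefficient calculation and in fact leaves the analyticity of $z(t)$ as an explicit open \emph{Conjecture} immediately following the proposition, so your justification, if written out, would resolve that conjecture rather than merely reprove the stated asymptotic.
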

  The proof of this proposition is deferred to the Appendix.
  \begin{corollary}
  The function $\hat z(x) \bydef  y_\be(x) \cdot x^{\frac{1}{1-\be}} $ satisfies the following limit relation
  $$ \lim_{x\dar 0} \hat z(x) =1.  $$
  \end{corollary}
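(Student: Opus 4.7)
The plan is to derive this directly from Proposition~\ref{prop:asympt_y1}. Since $\frac{1}{1-\be} = -\frac{1}{\be-1}$, multiplying both sides of the asymptotic expansion (\ref{eq:asymp_y1}) by $x^{\frac{1}{1-\be}}$ yields
$$
\hat z(x) = 1 - \frac{1}{2(\be-1)}\, x^{\frac{\be}{\be-1}} + \frac{\be+8}{24(\be-1)^2}\, x^{\frac{2\be}{\be-1}} - \frac{\be+3}{12(\be-1)^3}\, x^{\frac{3\be}{\be-1}} + \dots
$$
as $x \downarrow 0$. Because $\be>1$, the exponent $\frac{\be}{\be-1}$ is strictly positive, so every term after the leading $1$ tends to $0$ as $x \downarrow 0$. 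Passing to the limit gives $\lim_{x\downarrow 0}\hat z(x) = 1$.

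There is no real obstacle here: the corollary is an immediate rewriting of the leading-order statement already contained in the proposition. The only minor point to verify is the sign of the exponent $\frac{1}{1-\be}$, and that $x^{\frac{\be}{\be-1}} \to 0$ dominates all higher-order terms (which have even larger positive exponents $\frac{k\be}{\be-1}$), so the remainder tends to $0$ uniformly in the expansion.

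If one prefers an argument that avoids invoking the full expansion, an alternative route is to start from Equation~(\ref{eq:ybe}), rewrite it as $y_\be(x)^{\be-1} = x \cdot \frac{1 - e^{-x y_\be(x)}}{x y_\be(x)}$, and observe that $y_\be(x) \to 0$ as $x \downarrow 0$ by (\ref{eq:x_lim1}), so that $x y_\be(x) \to 0$ and hence $\frac{1 - e^{-x y_\be(x)}}{x y_\be(x)} \to 1$. This gives $y_\be(x)^{\be-1}/x \to 1$, i.e.\ $\hat z(x)^{\be-1} \to 1$, and monotonicity of $t \mapsto t^{\be-1}$ on $(0,\infty)$ yields $\hat z(x) \to 1$. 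Either route completes the proof.
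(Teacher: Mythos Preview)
Your argument is correct and follows the same idea as the paper: read off the leading term of the asymptotic expansion of $y_\be(x)$. The paper does not give a separate proof of this corollary; the limit $\hat z(x)\to 1$ is simply noted (inside the Appendix proof of Proposition~\ref{prop:asympt_y1}) as an immediate consequence of the earlier first-order expansion~(\ref{eq:asymp_y}) rather than of the full expansion~(\ref{eq:asymp_y1})---indeed, that limit is the starting point from which (\ref{eq:asymp_y1}) is derived, so citing (\ref{eq:asymp_y}) avoids any appearance of circularity. Your self-contained alternative via (\ref{eq:ybe}) and (\ref{eq:x_lim1}) is also valid and sidesteps both propositions entirely.
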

  \begin{remark}
  Let us introduce a new variable, $t \bydef x^{\frac{\be}{\be-1}} $ and define 
  $$ z(t)\bydef \hat z(x). $$
  The function $z(t)$ satisfies the following transcendental equation:
  $$ z^{\be-1}(t) = \vf\(t\cdot z(t)\), $$
  where
  $$\vf(u)=\frac{1-e^{-u}}{u}. $$
  It is instructive to look at the graph of the function $z(t)$. %%  for $\be$ taking three values,
 \begin{figure}[htp] % float placement:
    \vspace{-140pt}
   %% centering 
   \includegraphics[width=0.9\textwidth]{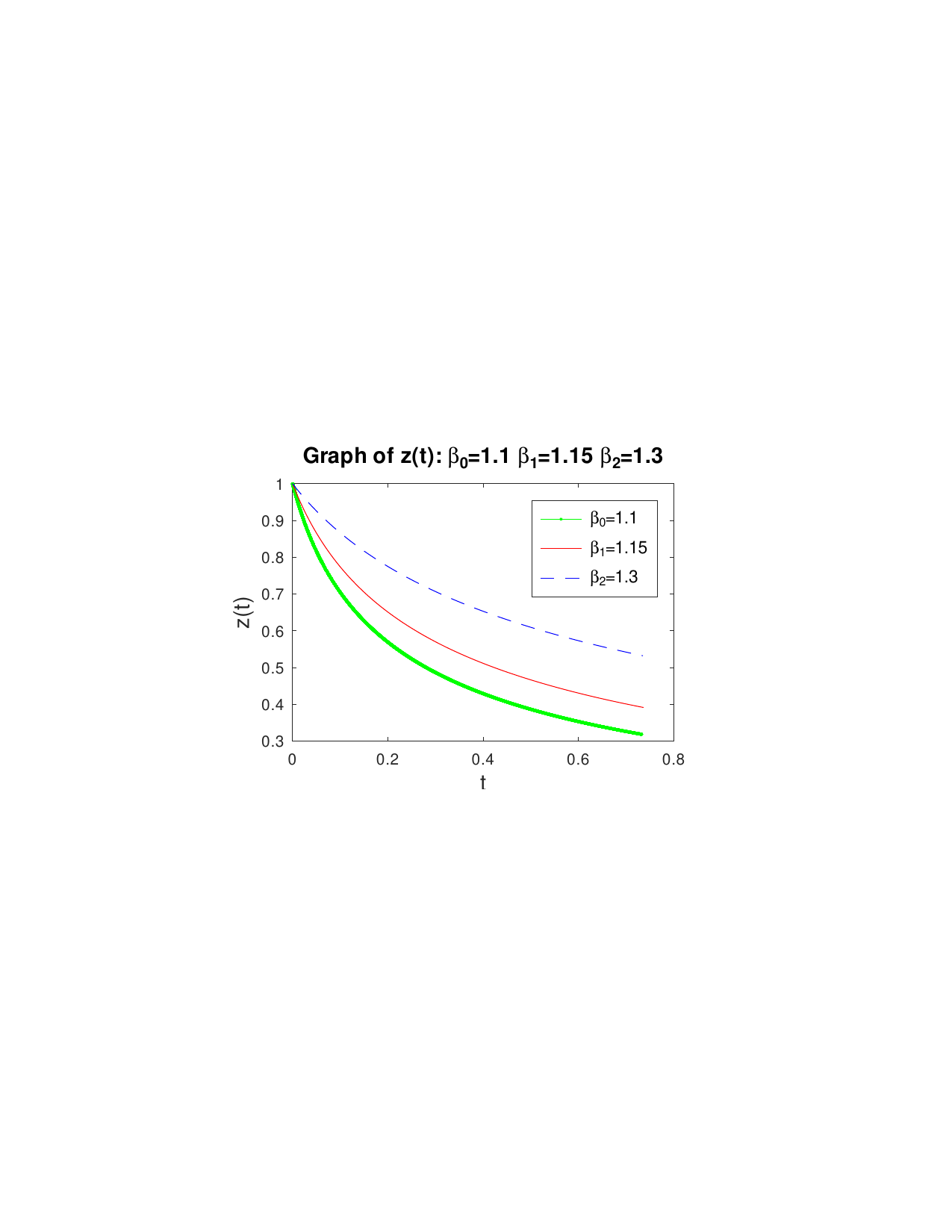}
    \vspace{-138pt}
  \caption{Graph of the function $z(t)$.}
  \label{fig:ztas}
\end{figure}
In Figure~\ref{fig:ztas} the graph of the function $z(t)$ is shown for the following three particular values of $\be$:
$\be_1=1.1$, $\be_2=1.15$, $\be_3=1.3$. 
All three graphs, obviously, start at the point $(0, 1)$. The function $z(t)$ is downward convex, positive and monotonically 
decreasing.

\subsection*{Conjecture}
  \noindent  {\it The function {\rm $z(t)$, $(t\ge 0)$}, is analytical.}
%%\end{hypothesis}
\end{remark}
 
 Our next step is the derivation of the two-sided  bounds for the function $y_\be(x)$. 
  \noindent Numerical comparison of the approximations (\ref{eq:asymp_y}) and (\ref{eq:asymp_y1}) and the lower bound, $y_L(x)$, introduced in Proposition~\ref{prop:bounds_y},  is discussed in Section~\ref{sec:numerics}.  
  
  %% The following proposition provides  simple non-uniform  upper and  lower bounds for    function $y_\be(x)$.                                                        
  \begin{proposition}\label{prop:bounds_y}
   For $x>0$ and $\be>1$,
  the function $y_\be(x)$ satisfies the following two-sided inequality:
  \beq
      \(1 - e^{-x}\)^{\frac{1}{\be-1}} < y_\be(x)<   \(1 - e^{-x}\)^{\frac{1}{\be}} \label{eq:ineq}
   \eeq
        \end{proposition}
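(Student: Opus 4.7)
Both inequalities are driven by the defining identity $y_\be^\be = 1 - e^{-x y_\be}$ together with monotonicity considerations and the basic fact, established in Lemma~\ref{lem:sol_exists}, that $0 < y_\be(x) < 1$.

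For the upper bound, the plan is essentially one line. Since $y_\be(x) < 1$, we have $xy_\be(x) < x$ and hence $e^{-xy_\be(x)} > e^{-x}$. Substituting into the defining equation gives $y_\be^\be(x) = 1 - e^{-xy_\be(x)} < 1 - e^{-x}$, and taking $\be$-th roots produces the right-hand inequality in (\ref{eq:ineq}).

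For the lower bound, my plan is to re-express the defining equation in the form
\[
y_\be^{\be-1}(x) \;=\; \frac{1 - e^{-x y_\be(x)}}{y_\be(x)} \;=\; x\cdot \vf\bigl(x\, y_\be(x)\bigr),
\]
where $\vf(u) \bydef (1 - e^{-u})/u$ is the function already introduced in the remark after Proposition~\ref{prop:asympt_y1}. The key auxiliary step is to verify that $\vf$ is strictly decreasing on $(0,\iy)$; this follows from the strict concavity of $u\mapsto 1 - e^{-u}$, which forces its chord slope from the origin to decrease in $u$. Given this, since $x y_\be(x) < x$, we obtain $\vf(x y_\be(x)) > \vf(x) = (1-e^{-x})/x$, so
\[
y_\be^{\be-1}(x) \;>\; x\cdot \frac{1 - e^{-x}}{x} \;=\; 1 - e^{-x}.
\]
Because $\be - 1 > 0$, raising both sides to the power $1/(\be-1)$ preserves the inequality and yields the left-hand side of (\ref{eq:ineq}).

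The main (and only) obstacle is the monotonicity of $\vf$, and this is routine: either differentiate and rearrange, or invoke concavity of $1-e^{-u}$ and the mean value theorem. Everything else is just algebraic manipulation of the defining equation (\ref{eq:ybe}) combined with the a priori bound $0 < y_\be(x) < 1$.
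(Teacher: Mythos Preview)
Your argument is correct. The upper bound is essentially identical to the paper's: both amount to observing that $y_\be<1$ forces $y_\be^\be=1-e^{-xy_\be}<1-e^{-x}$ (the paper phrases this via the inverse function, writing $x=-\log(1-y_\be^\be)/y_\be>-\log(1-y_\be^\be)$, which is the same inequality rearranged).

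For the lower bound, however, you take a genuinely different route. The paper expands the inverse function as the series $x=\sum_{k\ge 1} y_\be^{\be k-1}/k$ (cf.\ (\ref{eq:ser_x_be})) and bounds it termwise by $\sum_{k\ge 1} y_\be^{(\be-1)k}/k=-\log(1-y_\be^{\be-1})$, using that $\be k-1\ge(\be-1)k$ and $0<y_\be<1$. You instead rewrite the defining relation as $y_\be^{\be-1}=x\,\vf(xy_\be)$ and invoke the strict monotonicity of $\vf(u)=(1-e^{-u})/u$ to compare $\vf(xy_\be)>\vf(x)$. Both arguments land on the same intermediate inequality $y_\be^{\be-1}>1-e^{-x}$, but via different auxiliary facts. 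The paper's series comparison ties in naturally with the representation (\ref{eq:ser_x_be}) already established; your approach is arguably cleaner and reuses the function $\vf$ that appears elsewhere in the paper, at the small cost of needing to verify $\vf'<0$ (which, as you note, is routine).
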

        \begin{proof}
        It follows from the definition of the inverse function that
        $$ x = y_\be^{\be-1} + \frac12  y_\be^{2\be-1} + \frac13  y_\be^{3\be-1} + \dots $$
        Since $0<y_\ast<1$, we have
        $$ x < y_\be^{\be-1} + \frac12  y_\be^{2\be-2} + \frac13  y_\be^{3\be-3} + \dots $$
        The latter inequality can be written as
        $$ x < -\log\( 1 - y_\be^{\be-1}\). $$
        Therefore,
        $$ y_\be(x) >\( 1- e^{-x}\)^{\frac{1}{\be-1}}.  $$
        On the other hand, since
        $$ x = -\frac{\log(1-y_\be^\be)}{y_\ast} \quad\text{and} \quad 0<y_\be<1, $$
        we obtain that
        $$ x >  -\log(1-y_\be^\be) $$
      The latter inequality is equivalent to 
      $ y_\be(x) < \( 1- e^{-x}\)^{\frac{1}{\be}}.  $
      The upper bound, $y^U(x)= \( 1- e^{-x}\)^{\al}$, and the lower bound,
       $ y_L(x)= \( 1- e^{-x}\)^{\frac{\al}{1-\al}}$, hence established. %% derived.
        \end{proof}
        
   \begin{figure}[htp] % float placement:
    \vspace{-185pt}
   \centering 
   \includegraphics[scale=0.68,angle=0, left]{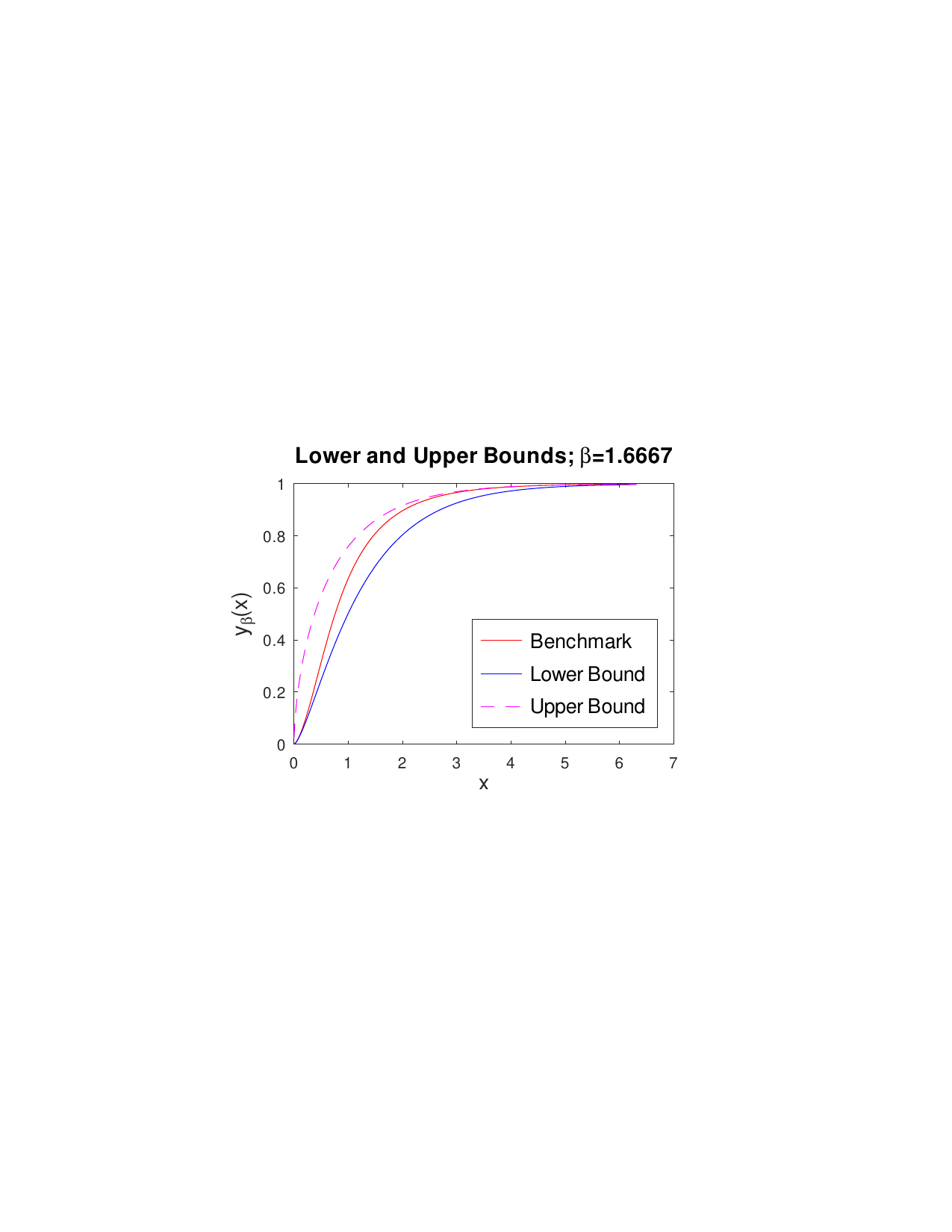}
    \vspace{-210pt}
  \caption{ The function $y_\be(x)$ and the bounds (\ref{eq:ineq}).}
  \label{fig:bounds}
\end{figure}

    In Figure~\ref{fig:bounds} the graph of the function, $y_\be(x)$, is displayed together with the upper
    and lower bounds established in    Proposition~\ref{prop:bounds_y}. 
    The lower bound captures  the asymptotic behaviour of the function $y_\be$ as $x\dar 0$
    while the upper   bound captures the asymptotic behaviour of $y_\be(x)$ as $x\to\iy$.
    
    It is also not difficult to see that   the ratio of the bounds approaches $1$ at infinity
    \beq
         \lim_{x\to\iy}  \frac{y^U(x)}{y_L(x)}=1. \label{eq:ratio_bounds}
      \eeq   
    Indeed, the ratio $\frac{y^U(x)}{y_L(x)}$, can be written as
    $$
           \frac{y^U(x)}{y_L(x)}=    \(1 + \frac{1}{e^x - 1}\)^\kappa, 
    $$
       where $ \kappa\bydef  \frac{\al^2}{1-\al}>0. $
       The latter relation implies  the validity of  (\ref{eq:ratio_bounds}).  %% was to be proved.     
       
       \section{Numerical examples}\label{sec:numerics} 
       \subsection{Roots of Equation~(\ref{eq:e_pr_ds})}
       The problem of the computation of the extinction probability can be addressed by using several numerical methods.
       
       The first approach is related to solution of  Equation~(\ref {eq:e_pr_ds}), considered in Section~\ref{sec:ext_prob},
       by bisection method.  An important technical issue is due to the fact for the values of     parameter $\be$ close 
       to $1$ this method 
       often presents a difficulty in  finding the  interval containing the root of the equation. 
       This situation is illustrated in Figure~\ref{fig:bisect} where the graph of the function, $\mcF_\be(x)$, 
        satisfying Equation~(\ref{eq:mcF_be}), is shown for
       three different values of parameter $\be$: $\be_1=1.25 $,  $\be_2=1.6 $ and $\be_3=1.95$.
       
 The positive root of the equation, marked by the star in Figure~\ref{fig:bisect} is moving towards $0$ as 
 $\be\dar 1$.  %%  the point $(0, 1)$                                                                                       
            
   \begin{figure}[htp] % float placement:
    \vspace{-150pt}
   \centering 
   \includegraphics[width=0.98\textwidth, left]{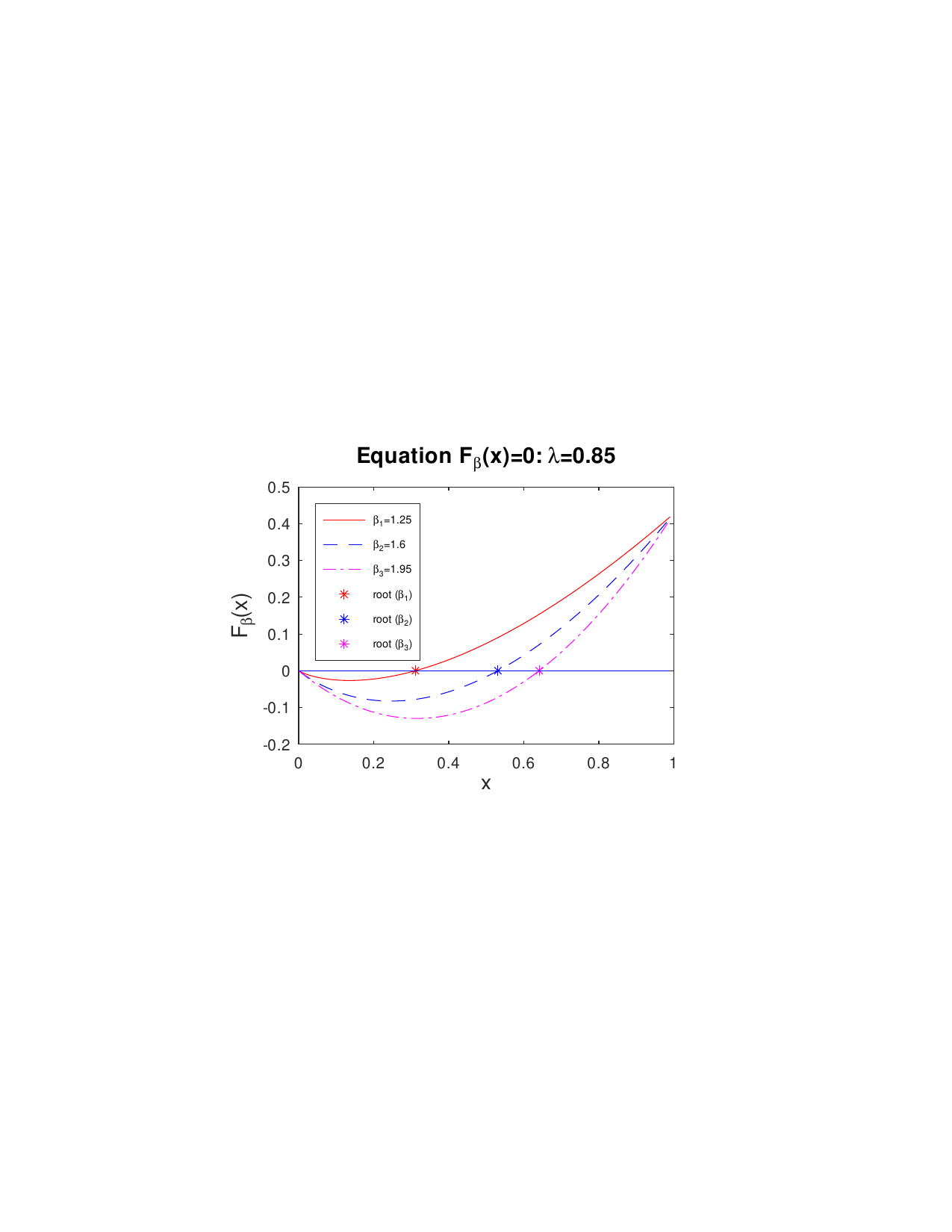}
    \vspace{-170pt}
  \caption{ Roots of Equation $\mcF_\be(x)=0$.}
  \label{fig:bisect}
\end{figure}
Moreover,  the minimal value of the function tends to $0$ monotonically, namely,
$ \inf_{x>0}\limits \mcF_\be(x)\downarrow 0  $ as $\be\downarrow 1 $. 
For this reason, the root of the equation, $x_0(\be)\dar 0$, as $\be\dar 1$.
Such a behaviour represents certain computational difficulties for the  bisection method. The numerical experiments
with MatLab
demonstrate  that  the interval, $[a, b]$, such that 
$\mcF_\be(a)\cdot \mcF_\be(b)<0$, remains undetermined for $\be<1.1$, due to computer finite precision.

\subsection{Comparison of approximations}\label{sec:comp_approx}
A much more efficient solution can be obtained by using the asymptotic approximations, derived 
in Section~\ref{sec:asympt_gLf}. Recall that the lower bound used as the asymptotic approximation of the function,
$y_\be(x)$, is $ y_L(x)= \( 1- e^{-x}\)^{\frac{\al}{1-\al}}$. 

%In Figure~\ref{fig:asympt0} this approximation is compared with
%the benchmark approximation.
%
%  \begin{figure}[htp] % float placement:
%    \vspace{-150pt}
%   \centering 
%   \includegraphics[width=0.9\textwidth, left]{}
%    \vspace{-160pt}
%  \caption{ Asymptotic approximation for $x\to 0$.}
%  \label{fig:asympt0}
%\end{figure}
The first and the second approximations, $y_\be^{(1)}(x)$ and $y_\be^{(2)}(x)$ of the function $y_\be(x)$, defined 
by Equations~(\ref{eq:asymp_y})
and (\ref{eq:asymp_y1}), respectively, and the lower bound, $y_L(x)$, are compared with the benchmark approximation of
$y_\be(x)$. The methodology of comparison is founded on a fixed set, $Y=\{y_1, y_2, \dots, y_n\}$, of values of
function, where $y_j$ are close to $0$. 
Given $\be>1$, we compute the benchmark approximation, $x_j=x_\be(y_j)$, $(j=1, 2, \dots, n)$,
and the distances $d_i(x) =\abs{ y_\be^{(i)}(x) - y_\be(x)}$, $(i=1, 2)$, and  $d_L(x) = \abs{ y_L(x) - y_\be(x)}$
for each $x=x_j$. 

\noindent In Figure~\ref{fig:asymptD3} we compare  logarithms of the distances, $\log\(d_i(x) \)$, $(i=1,2)$ and 
$\log\( d_L(x )\)$. Parameter $\be=1.25$. The set $Y$ is an arithmetic  progression,
$$ Y=\{ y_k \}_{k \ge 1}, \quad y_k = y_1 + \Delta \cdot (k-1), $$
where $y_1 = 0.0001$, $\max_{k\ge 1}\limits y_k=0.15$, $\Delta =0.0001$. The results shown in Figure~\ref{fig:asymptD3}
demonstrate advantage of the approximation $y_\be^{(2)}(x)$.

  \begin{figure}[htp] % float placement:
    \vspace{-150pt}
   \centering 
   \includegraphics[width=0.98\textwidth, left]{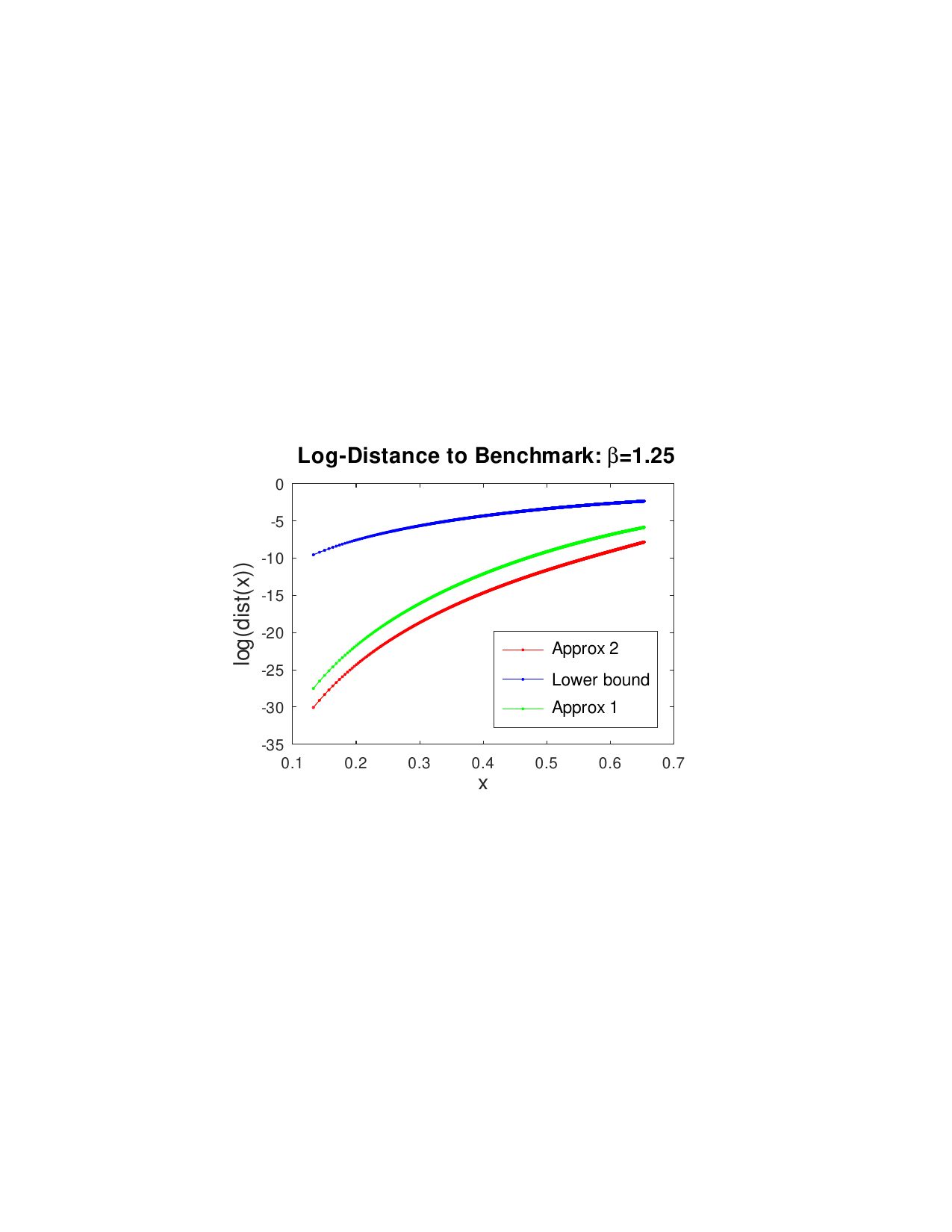}
    \vspace{-170pt}
  \caption{ Log-distance to benchmark.}
  \label{fig:asymptD3}
\end{figure}
Similar results are obtained for the other values of parameter $\be$, see Figure~\ref{fig:asymptD4}.
In that Figure, parameter $\be$ takes two values, $\be_1=1.25$ and $\be_2=1.35$.

 \begin{figure}[htp] % float placement:
    \vspace{-165pt}
   \centering 
   \includegraphics[width=0.98\textwidth, left]{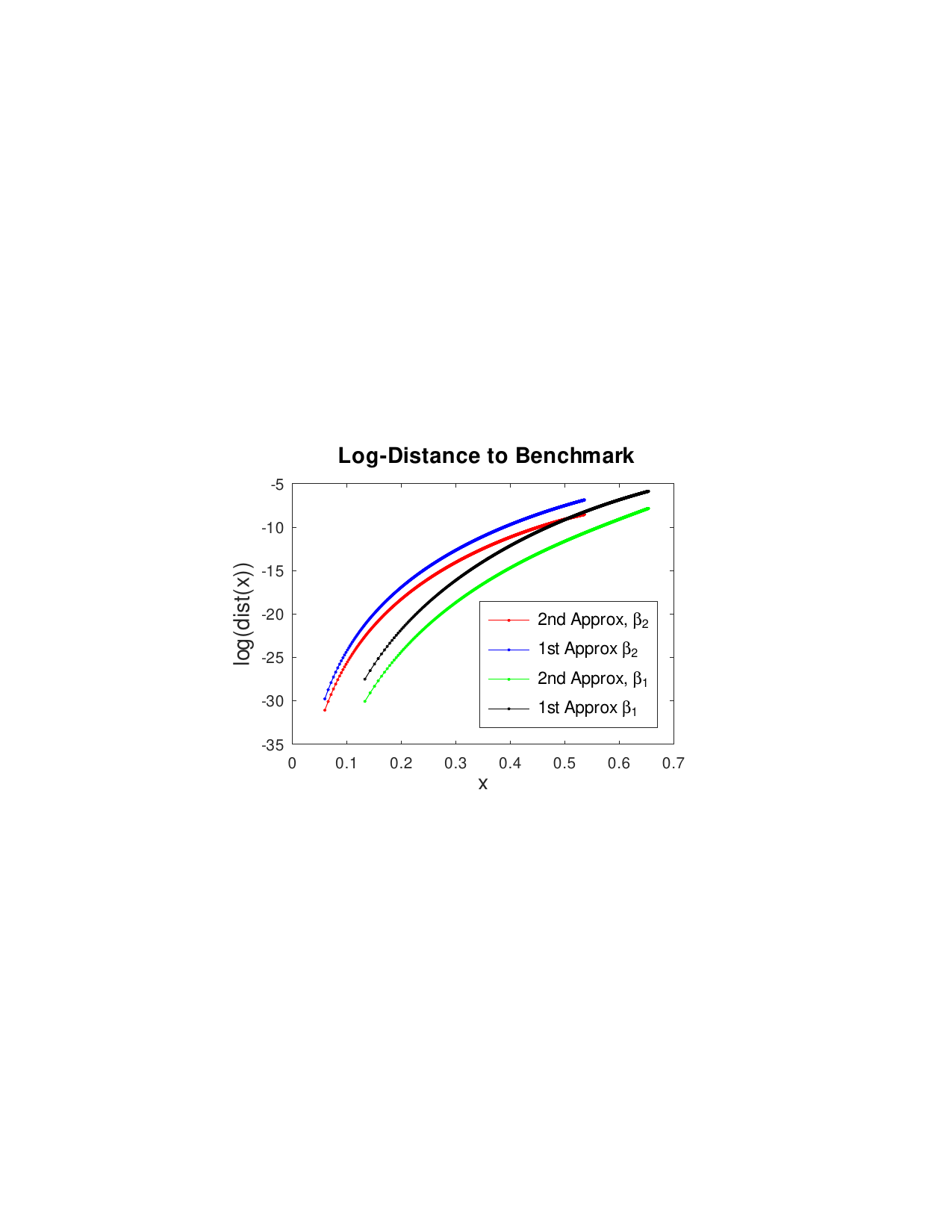}
    \vspace{-170pt}
  \caption{ Log-distance to benchmark: $\be_1=1.25, \be_2=1.35$.}
  \label{fig:asymptD4}
\end{figure}
In both cases the numerical results demonstrate advantage of  approximation (\ref{eq:asymp_y1}).
 
\subsection{The number of iterations required}
       The iterations described in Theorem~\ref{thm:conv_iter} converge independently of the choice of the
       initial approximation, $y_1$. However, the number of iterations, $\mcN(\vep)$,
        required for a specified accuracy $\vep>0$, does depend on $y_1$ as well as on $\be$ and $x$.
        Let us define $\mcN$ more precisely.
        \begin{definition}
        Given $x>0$, the required number of iterations, $\mcN(\vep, x)$, is defined as follows:
        $$ \mcN(\vep, x) \bydef \min_{n\ge 1}\abs{ y_n - y_\be(x) }\le\vep. $$
        \end{definition}
             
        Let us compare the following strategies for the choice of $y_1$:
        \bei
        \item[1. ] $y_1^{(1)}(x)= \(1 - e^{-x}\)^{\frac{1}{\be}} $ for all $x>0$.
        \item[2. ] $y_1^{(2)}(x) =  \(1 - e^{-x}\)^{\frac{1}{\be-1}} $.
        \item[3. ] $y_1^{(3)}(x) = \frac12\cdot\(   \(1 - e^{-x}\)^{\frac{1}{\be-1}}   +  \(1 - e^{-x}\)^{\frac{1}{\be}}  \)$.
        \eei
       The first strategy always takes the initial approximation to coincide with the upper bound. The second strategy
       takes the initial approximation to be equal to the lower bound for the solution.
       The third strategy takes the mid-point as the initial approximation.
          
         \begin{figure}[htp] % float placement:
    \vspace{-156pt}
 \includegraphics[width=0.98\textwidth, left]{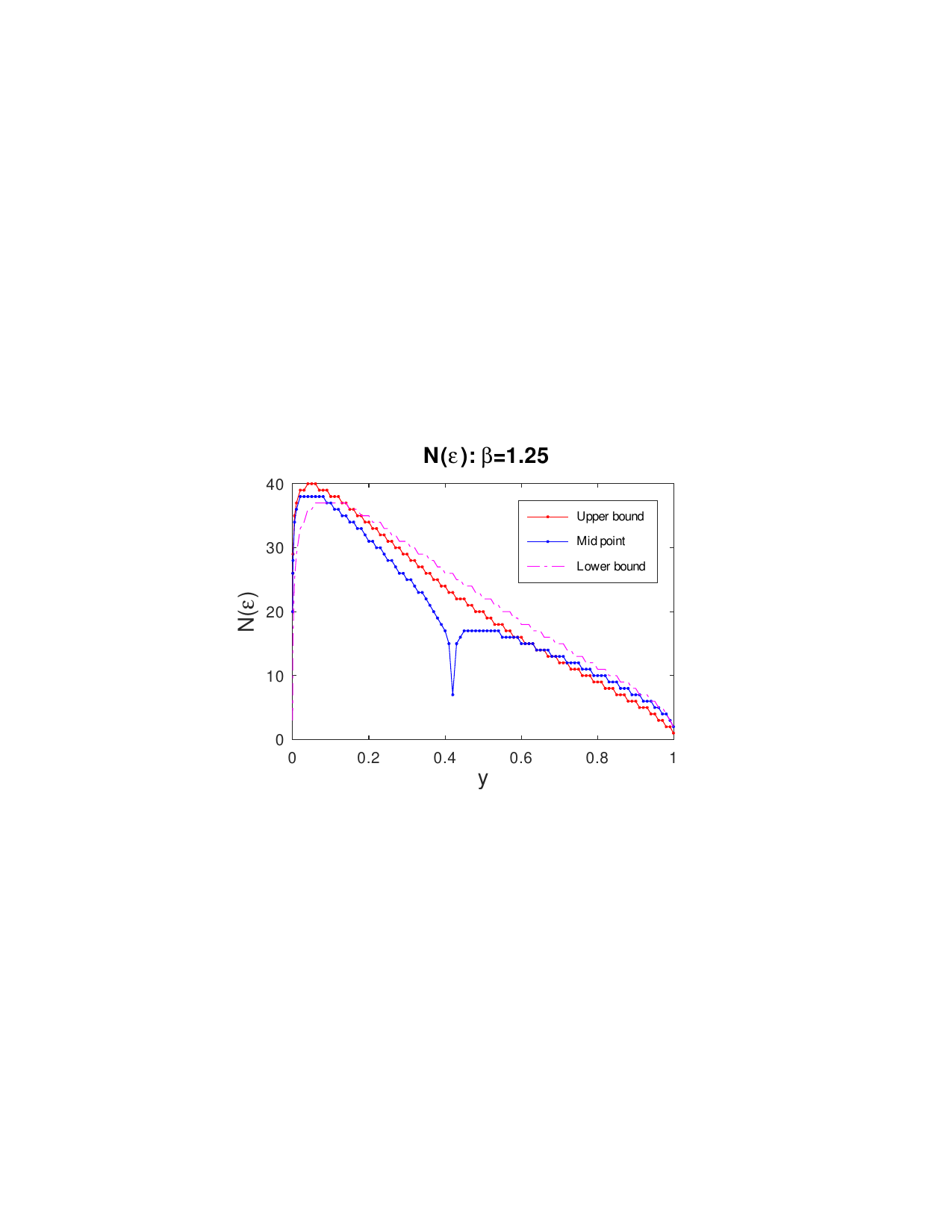}
    \vspace{-170pt}
  \caption{ Required number of iterations, $\mcN(\vep)$.}
  \label{fig:N_eps}
\end{figure}
In Figure~\ref{fig:N_eps}, the above three strategies are compared for $\vep=10^{-5}$. Parameter $\beta=1.25$
 in the numerical experiment related to Figure~\ref{fig:N_eps} is described in this section.

It is convenient to change the argument $x$ in the definition of the required number of iterations to $y_\be(x)$. In this 
case, the second argument of the function $\mcN(\vep, y)$ will belong to the interval $[0, 1]$.

 While the argument $y=y_\be(x)$ is small enough, $0 < y\le 0.15$, the initial approximation, $y_1^{(2)}(x)$, dominates
 the other strategies. In the mid-range case, $0.15 < y \le 0.6$, the mid-point strategy, $y_1^{(3)}(x)$, provides a
  faster convergence than the remaining two strategies. 
  
  For the large values of the argument $x$, such that $y\ge 0.6$, where $y=y_\be(x)$,  the first strategy 
  is the most efficient. This fact is consistent with the property of the upper bound, $y^U(x)$, to 
  provide the asymptotic  approximation of $y_\be(x)$ as $x\to\iy$.
     
       \section{Definition and properties of the random variable $\xi_\be$}\label{sec:rand_var}
       In the previous section we demonstrated that for each $\be>1$ the function $y_\be(x)$ is the cdf of an
       absolutely
       continuous random variable  denoted by $\xi_\be$.  Despite  the fact that the cdf of
       $\xi_\be$ is not known explicitly, many characteristics of $\xi_\be$ can still be found in closed form.
       In the next subsection, we compute the moments  of this random variable.
       \subsection{Computation of the moments, $\E[\xi_\be^n]$.}
       \begin{definition}
           We say that the random variable $\xi_\be$ has the generalized Lambert distribution if
       $$\Pp\(\xi_\be\le x\)=y_\be(x), \quad \forall x\ge 0. $$
       \end{definition}
    \begin{proposition}\label{prop:moments}   
    The $n$th moment of the r.v. $\xi_\be$ is
    \beq
     \E[ \xi_\be^n ] = (-1)^n \al\cdot \int_0^1 \frac{\log^n(1-t)}{t^{(n-1)\al +1}} \d t,
       \label{eq:nth_mom}
       \eeq
       where  $\al=1/\be$.
       \end{proposition}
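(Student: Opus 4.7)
The plan is to compute $\E[\xi_\be^n]$ by converting the Lebesgue--Stieltjes integral against the cdf $y_\be$ into an ordinary Riemann integral through two successive changes of variable. Since $y_\be$ is the cdf of the non-negative absolutely continuous random variable $\xi_\be$ (by Proposition~\ref{prop:cdf_y}), the starting point is
$$
\E[\xi_\be^n] \;=\; \int_0^\iy x^n\, dy_\be(x).
$$

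First I would make the change of variable $y = y_\be(x)$. By Proposition~\ref{prop:cdf_y}, together with the strict monotonicity and analyticity discussion immediately following (\ref{eq:inv_func}), the map $y_\be:[0,\iy)\to[0,1)$ is a strictly increasing absolutely continuous bijection with inverse $x_\be$. Hence $dy_\be(x)=dy$ and
$$
\E[\xi_\be^n] \;=\; \int_0^1 \bigl(x_\be(y)\bigr)^n\, dy.
$$

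Next I would insert the explicit formula $x_\be(y) = -\log(1-y^\be)/y$ from (\ref{eq:inv_func}) and perform the second substitution $t=y^\be$, under which $y = t^\al$ and $dy = \al\, t^{\al-1}\,dt$, while $t$ ranges over $[0,1]$. This yields
$$
\bigl(x_\be(y)\bigr)^n \;=\; (-1)^n\,\frac{\log^n(1-t)}{t^{n\al}},
$$
and combining with the Jacobian $\al\, t^{\al-1}$ the powers of $t$ collapse to $t^{(n-1)\al + 1}$ in the denominator, producing exactly (\ref{eq:nth_mom}).

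The only technical checkpoint is verifying absolute convergence so that both substitutions are rigorous. Near $t=0$, $|\log(1-t)|\sim t$, so the integrand behaves like $t^{n - (n-1)\al - 1} = t^{(n-1)(1-\al)}$, which is integrable since $\al\in(0,1)$ and $n\ge 1$. Near $t=1$, the weight $t^{-(n-1)\al - 1}$ stays bounded and $\int_0^1 |\log(1-t)|^n\,dt = n! < \iy$ is the classical moment identity for the standard exponential law. There is no deeper analytic obstacle; the mild subtlety lies entirely in this endpoint bookkeeping, which is the reason one needs $\al\in(0,1)$, equivalently $\be>1$, for the formula to make sense.
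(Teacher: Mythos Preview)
Your argument is correct and essentially identical to the paper's: the paper writes $\xi_\be = -\log(1-\mcU^\be)/\mcU$ with $\mcU$ uniform on $[0,1]$, which is precisely your first substitution $y=y_\be(x)$ read through the probability integral transform, and then makes the same change $t=s^\be$. Your additional endpoint integrability check is a welcome piece of rigor that the paper leaves implicit.
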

    \begin{proof}
    Let $\xi_\be$ be defined on a certain probability space $\trp$ and let $\mcU=\mcU([0, 1])$ be a random variable on  
    the same probability space, $\trp$, having  uniform $[0, 1]$ distribution. The random 
  variable $\xi_\be$ can be written as follows:
\beq
       \xi_\be = -\frac{\log\(1-\mcU^\be\)}{\mcU}. \label{eq:xi_be}
   \eeq 
     Then we obtain from (\ref{eq:xi_be}) that the moments of the r.v. $\xi_\be$ satisfy the relation
     $$  \E[ \xi_\be^n ] = \int_0^1 (-1)^n s^{-n} \cdot \log^n\!\!\(1-s^\be\) \d s, \quad n=1, 2, \dots. $$
     Changing the  variables
     $ t \bydef s^\be, $ in the above integral ascertains the validity of
     (\ref{eq:nth_mom}).
    \end{proof}
    In Figure~\ref{fig:log_moments} the first 6 log-moments, $\log\( \E[\xi_\be^n]\)$ of the random variable 
    $\xi_\be$ are shown for $\be=1.2$. The theoretical results are obtained from (\ref{eq:nth_mom}). These results are
     \begin{figure}[htp] % float placement:
    \vspace{-140pt}
   \centering 
   \includegraphics[width=0.9\textwidth]{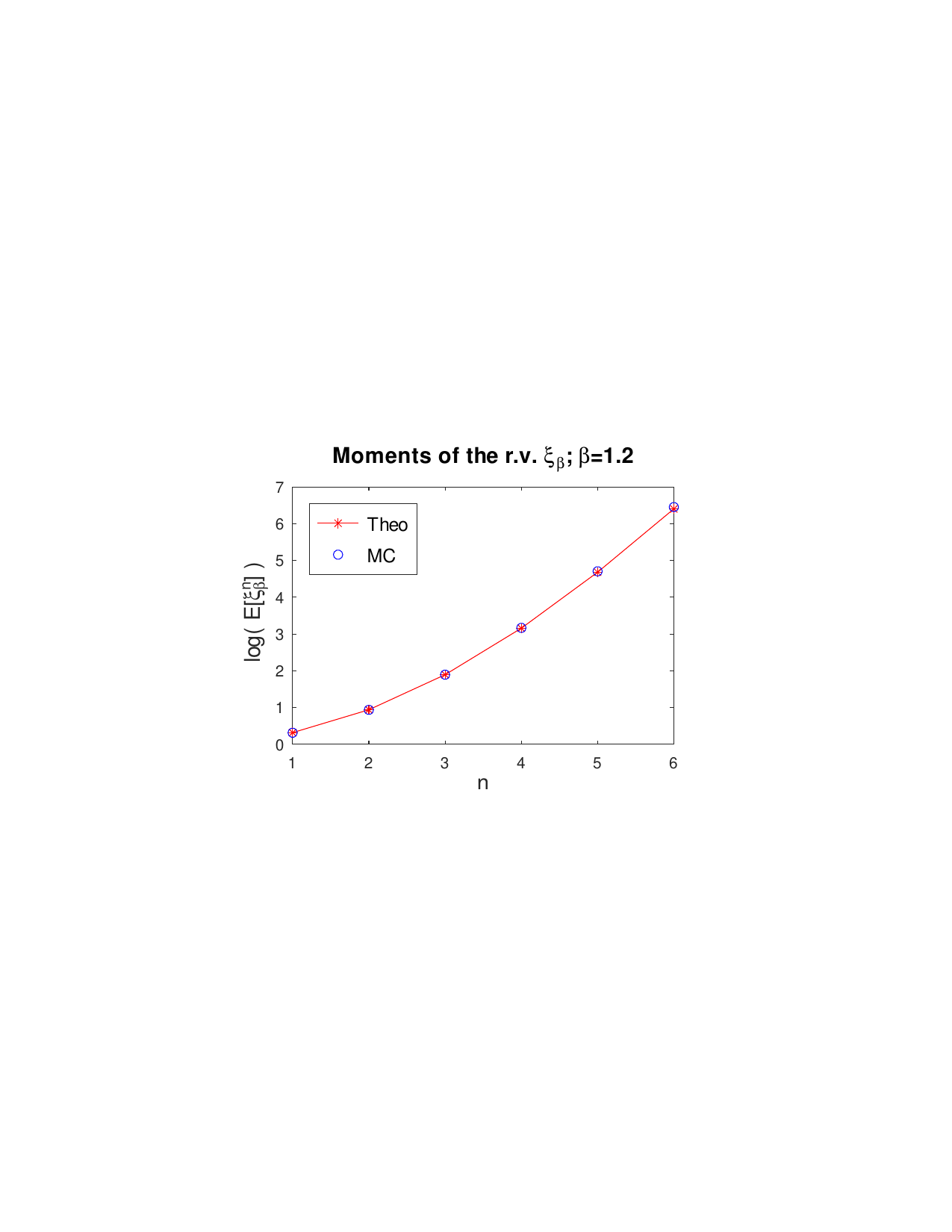}
    \vspace{-145pt}
  \caption{ The log-moments of $\! \xi_\be$:  $\!\log\!\( \E[\xi_\be^n]\), n=1, 2, \dots, 6$.}
  \label{fig:log_moments}
\end{figure}
compared with that obtained by Monte Carlo method.

    The following statement  stipulates integrability of the function $1-y_\be(x)$ and demonstrates unexpected
    connection to the Riemann  zeta function, $\zeta(x)$.
  \begin{proposition}\label{prop:int}
  The function $y_\be(x)$ satisfies the following relation:
  \beq
      \int_0^\iy \(1-y_\be(x)\) dx = \frac{\pi^2}{6}\cdot\frac{1}{\be}.  \label{eq:integr1}
   \eeq
\end{proposition}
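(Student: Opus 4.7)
The plan is to recognize the left-hand side as the mean of the non-negative random variable $\xi_\be$ whose cdf is $y_\be(x)$, and then evaluate that mean using the closed-form moment expression already furnished by Proposition~\ref{prop:moments}.

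First, I would invoke the standard identity for non-negative random variables, namely
\[
\E[\xi_\be] \;=\; \int_0^\iy \bar F_{\xi_\be}(x)\,\d x \;=\; \int_0^\iy \bigl(1-y_\be(x)\bigr)\,\d x,
\]
which is justified by Proposition~\ref{prop:cdf_y}, since $y_\be$ is the cdf of the non-negative absolutely continuous r.v.\ $\xi_\be$. Thus the claimed integral is simply $\E[\xi_\be]$.

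Next, I would specialize Proposition~\ref{prop:moments} to $n=1$. This yields
\[
\E[\xi_\be] \;=\; -\al\cdot\int_0^1 \frac{\log(1-t)}{t}\,\d t,
\]
where $\al = 1/\be$. The remaining task is the evaluation of the elementary integral $I := -\int_0^1 t^{-1}\log(1-t)\,\d t$. Using the power-series expansion $-\log(1-t)=\sum_{n\ge 1} t^n/n$, which converges on $[0,1)$ with non-negative terms, termwise integration (justified by monotone convergence) gives
\[
I \;=\; \sum_{n=1}^{\iy}\frac{1}{n}\int_0^1 t^{n-1}\,\d t \;=\; \sum_{n=1}^{\iy}\frac{1}{n^2} \;=\; \zeta(2) \;=\; \frac{\pi^2}{6}.
\]
Combining these pieces produces $\E[\xi_\be] = \al \cdot \pi^2/6 = \pi^2/(6\be)$, which is precisely the identity (\ref{eq:integr1}).

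There is no real obstacle here; the entire argument is a two-line combination of Proposition~\ref{prop:moments} with the classical Euler evaluation of $\zeta(2)$. The only point requiring a brief justification is the interchange of sum and integral at the last step, which is immediate from monotone convergence since all summands are non-negative on $[0,1]$.
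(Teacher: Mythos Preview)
Your proof is correct and follows essentially the same approach as the paper: identify the integral as $\E[\xi_\be]$ via the survival-function formula, then evaluate the resulting integral by expanding $-\log(1-t)$ as a power series and integrating termwise to obtain $\zeta(2)$. The only cosmetic difference is that the paper works with the pre-substitution form $-\int_0^1 s^{-1}\log(1-s^\be)\,\d s$ rather than invoking the $n=1$ case of Proposition~\ref{prop:moments} directly, but the computations are equivalent.
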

  \begin{proof}
  Since the random variable $\xi_\be\ge 0$,
   its first moment can be obtained by integrating the corresponding survival function: % satisfies the relation
   $$\E[\xi_\be] =  \int_0^\iy \(1-y_\be(x)\) dx. $$
      On the other hand, from (\ref{eq:nth_mom}) we have
  \beq
      \E[\xi_\be]=-\int_0^1 \frac{\log\(1-s^\be\)}{s} \d s. \label{eq:Exi}
   \eeq
    Denote $v\bydef s^\be$ and note that  for $0\le s<1$, 
    $$ \log\( 1 - v\) =-\sum_{n=1}^\iy \frac{v^n}{n}. $$
    Then from (\ref{eq:Exi}) we derive
    \begin{eqnarray*}
        \E[\xi_\be] &=& \int_0^1 \sum_{n=1}^\iy \frac{s^{\be n-1}}{n} \d s \\
        &=&  \frac1\be \cdot \sum_{n=1}^\iy \frac{1}{n^2}.
        \end{eqnarray*}
        Thus, formula (\ref{eq:integr1}) is derived.
  \end{proof}
 Obviously, we can write the first moment of $\xi_\be$ as $\E[\xi_\be]=\zeta(2) \cdot \al$.
   \begin{proposition}\label{prop:sec_mom}
  The second moment of the random variable, $\xi_\be$, satisfies the relation %% $y_\ast(x)$ satisfies the following relation:
  \beq
     \E[\xi_\be^2] =\frac{1}{\be}\cdot  \int_0^1 t^{-1 -\al}\log^2(1-t)\d t.  \label{eq:m2}
   \eeq
   The standard deviation of $\xi_\be$ is
   \beq
        \si(\xi_\be)=\sqrt{ \al\cdot \int_0^1 t^{-1 -\al}\log^2(1-t)\d t - \frac{\pi^4}{36}\cdot\al^2 }. \label{eq:std_xi}
      \eeq  
\end{proposition}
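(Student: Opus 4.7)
The plan is to obtain both identities as immediate specializations, relying on Proposition~\ref{prop:moments} for the second moment and Proposition~\ref{prop:int} for the first moment. Specifically, setting $n=2$ in formula~(\ref{eq:nth_mom}) makes the sign factor $(-1)^n$ equal to $1$, and the denominator exponent $(n-1)\al + 1$ simplifies to $\al + 1$. Rewriting the prefactor $\al$ as $1/\be$ then produces exactly formula~(\ref{eq:m2}).

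For the standard deviation I would combine this with the elementary identity
\[
 \si^2(\xi_\be) \;=\; \E[\xi_\be^2] \;-\; \bigl(\E[\xi_\be]\bigr)^2.
\]
Proposition~\ref{prop:int} already established $\E[\xi_\be] = \al\cdot\zeta(2) = \al\pi^2/6$, so squaring gives $(\E[\xi_\be])^2 = \al^2\pi^4/36$. Subtracting this from (\ref{eq:m2}) and taking the positive square root yields precisely (\ref{eq:std_xi}).

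The only genuine issue to address, since Proposition~\ref{prop:moments} is an algebraic identity, is to verify that the integral in (\ref{eq:m2}) converges, so that $\si(\xi_\be)$ is a well-defined real number. I would check this at the two endpoints: near $t=0$, the Taylor expansion $\log^2(1-t) \sim t^2$ makes the integrand behave like $t^{1-\al}$, which is integrable because $\al \in (0,1)$; near $t=1$, the factor $t^{-1-\al}$ is bounded, and $\log^2(1-t)$ has only a mild logarithmic singularity, so integrability is immediate. I expect no further obstacle, since the entire argument reduces to unpacking formulas already proved.
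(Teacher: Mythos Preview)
Your proposal is correct and follows exactly the paper's own approach: the paper derives (\ref{eq:m2}) by specializing (\ref{eq:nth_mom}) to $n=2$, and obtains (\ref{eq:std_xi}) from (\ref{eq:m2}) together with $\E[\xi_\be]=\al\pi^2/6$. Your additional convergence check for the integral is a harmless (and welcome) elaboration, but not a departure from the paper's argument.
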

  \begin{proof}
  The proof of (\ref{eq:m2}) follows from Equation~(\ref{eq:nth_mom}). The formula for the standard deviation, $\si(\xi_\be)$,
  follows from (\ref{eq:m2}) and  the relation $\E[\xi_\be]=\frac{\pi^2}{6} \cdot \al$. \end{proof}
  
  In Figure~\ref{fig:mom12}, the first moment, $E(\al) =\E[\xi_{1/\al}]$ and the standard deviation, 
  $\si(\al)=\si(\xi_{1/\al})$ are shown for $\al\in (0, 1)$. The analytical results are accompanied by the Monte Carlo
  valuation of the first moment of $\xi_\be$ and the standard deviation of this random variable. 
   
 \begin{figure}[htp] % float placement:
    \vspace{-140pt}
   %% centering 
   \includegraphics[width=0.9\textwidth]{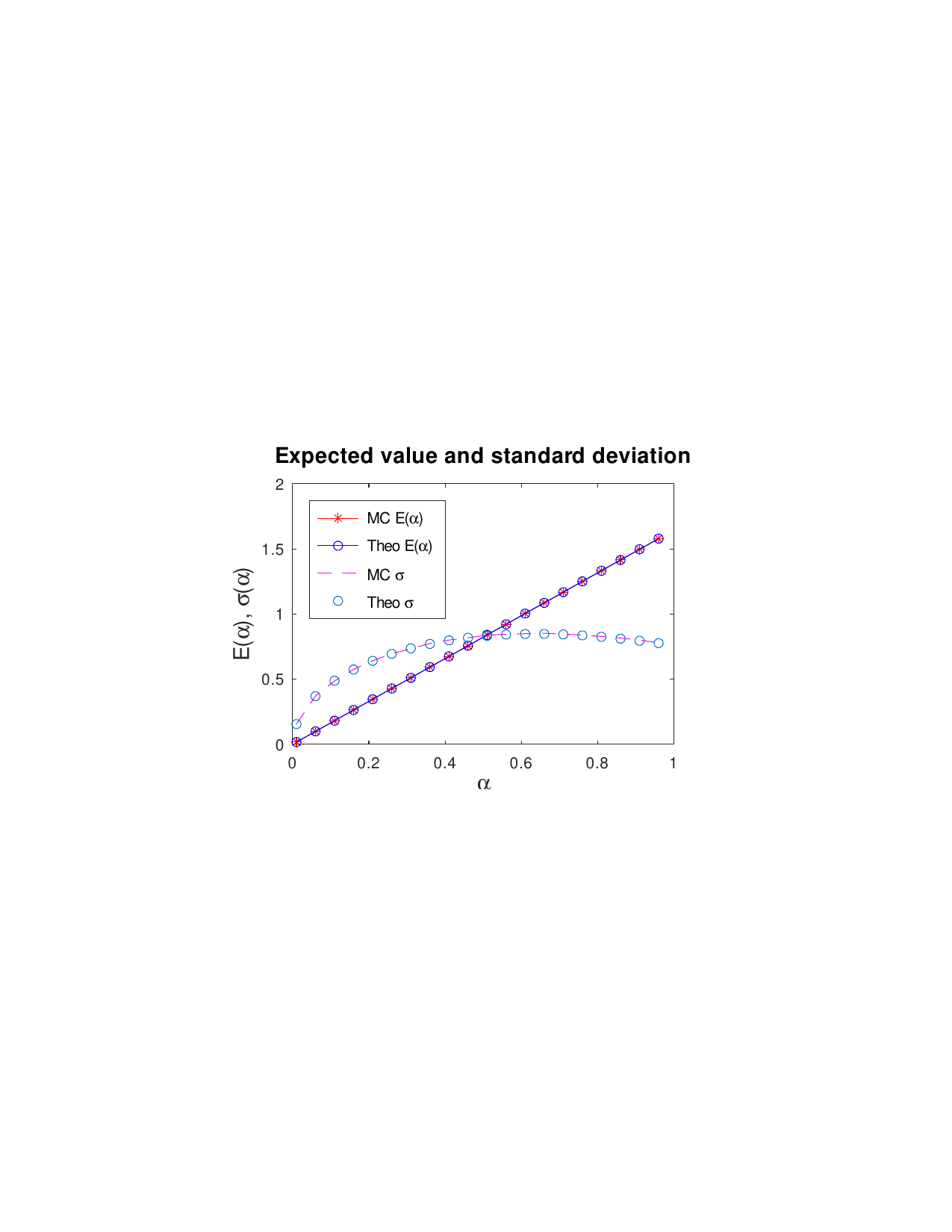}
    \vspace{-138pt}
  \caption{Expectation and standard deviation of $\! \xi_\be$.}
  \label{fig:mom12}
\end{figure}
The theoretical value of the expectation, $\E[\xi_\be]$, and its MC estimator form a straight line in 
Figure~\ref{fig:mom12}. The standard deviation, $\si(\al)$, and its MC estimator exhibit a non-linear, non-monotone
dependency on $\al$.
\subsection{The moment problem for  $\xi_\be$.}
The classical  moment problem \cite{Ahi},  \cite{Krein}, \cite{Schmu},  deals with existence and uniqueness of a finite, 
non-negative measure  $\mu$ on $\R^1$ such
that a given sequence of real numbers, $m_1$, $m_2$, $\dots$, $m_n$, $\dots$ satisfying the relation
$$ m_n = \int_{\R^1} x^n \d\mu(x), \quad n=0, 1, 2, \dots.  $$
In  this Section, we consider the case of the probability measure $\mu$ corresponding to the distribution of the random
variable $\xi_\be$:
$$  \mu(A) = \Pp\( \xi_\be\in A\), \quad A\,\,\text{is a measurable subset of $\R^1$.}  $$
In this case, the support of the measure $\mu$ is $\R^1_+$. 

Existence of such probability measure is obvious in our case. The uniqueness problem can be formulated as follows:
\bei
\item[$ $]
{\it Given a sequence of moments, $\{ M_n\}_{n\ge 1}$, is there another  measure, $\mu^\ast$,
different from the probability
measure $\mu$ defined by the cumulative distribution function $y_\be(x)=\Pp(\xi_\be\le x)$, such that the 
$n$th moment corresponding to measure
$\mu^\ast$ is $M_n$ for all integer $n\ge 1$?}
\eei
If the solution to the moment problem is unique, we say that the moment problem is  determinate. Otherwise
the moment problem is said to be indeterminate.
\begin{proposition}\label{prop:uniq_mom_probl}
Let the moments, $\{M_n\}_{n\ge 1}$, be defined by Equation~{\rm (\ref{eq:nth_mom})}:
$$
     M_n= (-1)^n \al\cdot \int_0^1 \frac{\log^n(1-s)}{s^{(n-1)\al +1}} \d s.
$$     
Then there exists a unique probability measure, $\mu$, on $\R^1_+$ such that
$$ \int_0^\iy x^n \d \mu(x) = M_n, \quad n=0, 1, \dots. $$
\end{proposition}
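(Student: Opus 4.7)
The plan is to show that the Stieltjes moment problem for $\xi_\be$ is determinate by proving the moment generating function $M_{\xi_\be}(t)\bydef \E[\exp(t\xi_\be)]$ is finite on some neighborhood of the origin, and then invoking the classical theorem that a probability distribution on $\R^1_+$ with finite MGF in a neighborhood of $0$ is uniquely determined by its moment sequence.

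The first step is to extract an exponential tail bound on the survival function $\bar F_{\xi_\be}(x)=1-y_\be(x)$. Proposition~\ref{prop:bounds_y} supplies the lower bound
\[ y_\be(x) > (1-e^{-x})^{1/(\be-1)}, \quad x>0, \]
so that
\[ 1 - y_\be(x) < 1 - (1-e^{-x})^{1/(\be-1)}. \]
A Taylor expansion with $u=e^{-x}$ gives $(1-u)^{1/(\be-1)} = 1 - u/(\be-1) + O(u^2)$ as $u\dar 0$, hence $1-y_\be(x) = O(e^{-x})$ as $x\to\iy$. Combined with the trivial bound $1-y_\be(x)\le 1$ on all of $\R^1_+$, this yields a constant $C_\be>0$ with $1-y_\be(x)\le C_\be e^{-x}$ for every $x\ge 0$.

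The second step uses the identity
\[ \E[\exp(t\xi_\be)] = 1 + t\int_0^\iy e^{tx}\bigl(1-y_\be(x)\bigr)\,\d x, \]
valid whenever the right-hand side converges. The tail bound immediately gives convergence for every $t<1$, so $M_{\xi_\be}$ is finite (and in fact analytic) on, say, $(-1/2,1/2)$. The classical moment-determinacy theorem then asserts that the probability measure $\mu$ induced by the cdf $y_\be$ is the unique non-negative measure on $\R^1_+$ whose moments coincide with the sequence $\{M_n\}_{n\ge 0}$ of Proposition~\ref{prop:moments}, which is the claim.

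There is no substantial obstacle: the argument is a clean combination of the exponential tail bound obtained for free from Proposition~\ref{prop:bounds_y} with a standard theorem on moment determinacy. An alternative route would be to verify Carleman's sufficient condition $\sum_{n\ge 1} M_n^{-1/(2n)} = \iy$ directly from the integral representation of Proposition~\ref{prop:moments}, but the MGF route bypasses any moment-growth estimates and is more transparent.
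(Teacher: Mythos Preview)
Your proof is correct and takes a genuinely different route from the paper's.

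The paper first establishes the asymptotic $M_n = c_\be\cdot n! + O(n!/2^n)$ (Lemma~\ref{lem:mom_Mn}), whose proof combines the two--sided bounds of Proposition~\ref{prop:bounds_y} with the tail--integral formula $M_n = n\int_0^\iy x^{n-1}\bar y_\be(x)\,\d x$, and then verifies the sufficient condition $\limsup_{n\to\iy} M_n^{1/n}/n<\iy$ via Stirling. You instead extract from the \emph{same} lower bound in Proposition~\ref{prop:bounds_y} the exponential tail estimate $1-y_\be(x)\le C_\be e^{-x}$ and conclude determinacy through finiteness of the moment generating function on $(-1,1)$.

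Both arguments rest on the identical analytic input (the lower bound of Proposition~\ref{prop:bounds_y}); yours is shorter and bypasses the moment--growth computation entirely. The paper's route, on the other hand, yields the precise factorial growth $M_n\sim c_\be\, n!$ as a by--product, which is of independent interest. Your closing remark about Carleman's condition is also anticipated by the paper in a remark following its proof.
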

The proof of Proposition~\ref{prop:uniq_mom_probl} is based on the following technical statement on the
rate of growth of the moments, $M_n$.
\begin{lemma}\label{lem:mom_Mn}
The sequence $M_n$  exhibits  the following asymptotic relation:
\beq
    M_n =c_\be \cdot n! + {\mathcal{O}}\(\frac{n!}{2^n}\) \quad \text{as $n\to \iy$}, \label{eq:asympt_M_n}
  \eeq
  where the constant $c_\be$ satisfies the inequalities $\frac{1}{\be} < c_\be < \frac{1}{\be - 1}.$
  \end{lemma}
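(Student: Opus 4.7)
The plan is to convert $M_n$ into an absolutely convergent series whose first term is already a multiple of $n!$, and then to control the tail of that series. First, in the integral representation (\ref{eq:nth_mom}) I would apply the substitution $u=-\log(1-s)$, so that $s=1-e^{-u}$, $\d s=e^{-u}\,\d u$, and $(-1)^n\log^n(1-s)=u^n$. This yields
\[
M_n \;=\; \al\int_0^\iy \frac{u^n\,e^{-u}}{(1-e^{-u})^{\ga_n}}\,\d u, \qquad \ga_n\bydef(n-1)\al+1.
\]
The transformation makes evident that the integrand is sharply peaked near $u=n$ and that the singular factor $(1-e^{-u})^{-\ga_n}$ is exponentially close to $1$ in that range; this is the analytic reason behind the expected asymptotics $M_n\sim c_\be\cdot n!$.

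Second, I would expand the singular factor by the negative-binomial series $(1-e^{-u})^{-\ga_n}=\sum_{k\ge0}\binom{\ga_n+k-1}{k}e^{-ku}$ and integrate term by term, which is justified by Tonelli since all summands are non-negative. Using $\int_0^\iy u^n e^{-(k+1)u}\,\d u=n!/(k+1)^{n+1}$, this produces the exact identity
\[
M_n \;=\; \al\cdot n!\,\sum_{k=0}^\iy \binom{\ga_n+k-1}{k}\,\frac{1}{(k+1)^{n+1}}.
\]
The $k=0$ summand contributes exactly $\al\cdot n!=n!/\be$, and all subsequent summands are strictly positive. This immediately gives $M_n/n!>1/\be$ for every $n$ and identifies $c_\be=1/\be$ as the limiting proportionality constant.

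Third, to extract the $\mathcal{O}(n!/2^n)$ error I would bound the tail $\sum_{k\ge1}\binom{\ga_n+k-1}{k}(k+1)^{-(n+1)}$. The $k=1$ term is $\ga_n/2^{n+1}$, and the ratio of the $(k+1)$-st to the $k$-th summand, namely $((\ga_n+k)/(k+1))\cdot((k+1)/(k+2))^{n+1}$, is bounded above by $1/2$ uniformly in $k$ for all sufficiently large $n$; hence the tail is dominated by a geometric series with ratio $1/2$, giving the stated $\mathcal{O}(n!/2^n)$ estimate. For the upper bound $c_\be<1/(\be-1)$ I would invoke the tail-integral formula $M_n=n\int_0^\iy x^{n-1}(1-y_\be(x))\,\d x$ together with the lower bound $y_L(x)=(1-e^{-x})^{\al/(1-\al)}\le y_\be(x)$ from Proposition~\ref{prop:bounds_y}; repeating the Step~1--Step~2 calculation with exponent $\al/(1-\al)$ in place of $\al$ shows that the resulting upper bound on $M_n$ is asymptotic to $n!/(\be-1)$, which combined with the Step~3 identification of $c_\be$ yields the strict inequality.

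The main obstacle will be ensuring that the polynomial factor $\ga_n\sim n\al$ appearing in the $k=1$ correction can be absorbed into an $\mathcal{O}(n!/2^n)$ rate rather than producing the weaker $\mathcal{O}(n\cdot n!/2^n)$; the uniform ratio-test argument above achieves this at a small cost in the implicit constant, and this is the one delicate bookkeeping item in an otherwise routine computation.
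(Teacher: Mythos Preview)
Your route differs from the paper's. The paper never manipulates the moment integral directly; it invokes the tail formula $M_n=n\int_0^\iy x^{n-1}\bigl(1-y_\be(x)\bigr)\d x$ and sandwiches $1-y_\be$ between $1-y^U$ and $1-y_L$ via Proposition~\ref{prop:bounds_y}, then expands each bounding integral $\mcJ(\ga)=n\int_0^\iy x^{n-1}\bigl(1-(1-e^{-x})^{1/\ga}\bigr)\d x$ as a binomial series in powers of $e^{-x}$. That argument only traps $M_n/n!$ in the interval $[1/\be,\,1/(\be-1)]$ up to $O(2^{-n})$ and does not single out a limiting value of $c_\be$. Your exact identity $M_n=\al\, n!\sum_{k\ge0}\binom{\ga_n+k-1}{k}(k+1)^{-(n+1)}$ is correct and is in fact sharper: the $k=0$ term already pins down $M_n/n!\to\al=1/\be$, so your computation shows that the strict lower inequality $c_\be>1/\be$ in the lemma statement is actually an equality.

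There is, however, a genuine gap in your Step~3. The $k=1$ summand equals $\ga_n/2^{n+1}$ with $\ga_n=(n-1)\al+1\sim\al n$, so this single positive term already contributes $\al^2 n\, n!/2^{n+1}$ to $M_n-\al\, n!$. Since every term in your series is non-negative, no ratio comparison can push the tail below its first term; the best the method yields is $M_n=\al\, n!+\Theta\bigl(n\cdot n!/2^n\bigr)$, not $\mathcal O(n!/2^n)$, and no ``small cost in the implicit constant'' absorbs the extra factor $n$. Separately, the claimed uniform ratio bound is false: for $k$ comparable to $n$ the ratio $\bigl((\ga_n+k)/(k+1)\bigr)\bigl((k+1)/(k+2)\bigr)^{n+1}$ is approximately $(1+\al)e^{-1}$, which exceeds $1/2$ whenever $\al>e/2-1\approx0.359$, and as $k\to\iy$ with $n$ fixed the ratio tends to~$1$. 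For the downstream application to Proposition~\ref{prop:uniq_mom_probl} the weaker rate is entirely sufficient, but it does not establish the lemma as written.
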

  \noindent Proof of Lemma~\ref{lem:mom_Mn} is deferred to the Appendix.
\begin{proof}[Proof of Proposition~{\rm \ref{prop:uniq_mom_probl}}]
Let us prove that the  measure $\mu$ is determinate.
%Since the support of the measure $\mu$ is $\R^1_+$, the $n$th moment, $M_n$, satisfies the relation %% as
%\beq
%M_n = n \int_0^\iy x^{n-1} \bar F(x) \d x, \label{eq:mom_tail}
%\eeq
%where $\bar F(x)=\mu([x, +\iy))$.
%The moments $M_n$ satisfy the relation
%$$ M_n =n \int_0^\iy x^{n-1} \bar y_\be(x) \d x,  $$
%where $\bar y_\be(x) = 1 - y_\be(x)$. Then from  (\ref{eq:ineq}) we obtain the inequality
%$$ n \int_0^\iy x^{n-1} \(1 - (1 - e^{-x})^{1/\be}\) \d x < M_n < n \int_0^\iy x^{n-1} \(1 - (1 - e^{-x})^{\frac{1}{\be-1}}\) \d x. $$
% %%he proposition.
    We would like to verify the following sufficient condition for the moments   of the
    probability measure concentrated on $\R^1_+$ (see \cite{Krein}, \cite{Schmu}, \cite{Shir} ):
    \beq
     \limsup_{n\to\iy} \frac{M_n^{1/n}}{n}< \iy \label{eq:mom_cond}
    \eeq 
    From (\ref{eq:asympt_M_n}) we derive that as $n\to\iy$
    $$ M_n = c_\be \sqrt{2\pi n} \cdot \( \frac{n}{e}\)^n + o(n!). $$  
    Then we find that
    $$ \limsup_{n\to\iy}  \frac{M_n^{1/n}}{n} = {\mathcal{O}}(1). $$
    The latter asymptotic equation implies (\ref{eq:mom_cond}), as was to be proved.\end{proof}
    \begin{remark}
    One can also use  Carleman's test (see \cite{Krein}, \cite{Schmu}, \cite{Shir} ) to verify that the sequence
    of moments, $\{M_n\}_{n\ge 1}$ is determinate.
    \end{remark}

%%Moment Problem has a unique solution
\subsection{Moments, generating function and identities}
We conclude this section with a few remarks on the connection between the moments of the r.v. $\xi_\be$ and the 
generating functions of several remarkable sequences including the sequence $\{ \zeta(n)\}_{n\ge 2}$, where
$\zeta(\cdot)$ is the Riemann zeta function.

Let us start with the second moment, $\E[\xi_\be^2]$. From (\ref{eq:m2}) we have
\beq  \E[\xi^2_{1/\al}] = \al \cdot \mcI(\al),  \label{eq:E2_alf}  \eeq
where
$$  \mcI(\al)=  \int_0^1 t^{-1 -\al}\log^2(1-t)\d t.  $$
\begin{proposition}\label{prop:int_mom}
The integral, $\mcI(\al)$, satisfies the relation
\beq
    \mcI(\al) = \sum_{k=1}^\iy \sum_{m=1}^\iy \frac1k\cdot \frac1m \cdot \frac{1}{k+m-\al}, \qquad 0<\al <1.
       \label{eq:m3}
\eeq
\end{proposition}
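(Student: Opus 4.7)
The plan is to expand $\log^2(1-t)$ as a double power series in $t$, interchange the double sum with the integral, and evaluate term-by-term. This is the standard strategy that connects such logarithmic integrals to Tornheim-type double series.

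Concretely, I would first write, for $0 \le t < 1$,
\[
  \log(1-t) = -\sum_{k=1}^{\infty} \frac{t^k}{k},
\]
and square to obtain the Cauchy product
\[
  \log^2(1-t) = \sum_{k=1}^{\infty}\sum_{m=1}^{\infty} \frac{t^{k+m}}{k\,m}.
\]
Substituting into $\mcI(\al) = \int_0^1 t^{-1-\al} \log^2(1-t)\,\d t$ formally gives
\[
  \mcI(\al) = \sum_{k=1}^{\infty}\sum_{m=1}^{\infty} \frac{1}{k\,m} \int_0^1 t^{k+m-1-\al}\,\d t
           = \sum_{k=1}^{\infty}\sum_{m=1}^{\infty} \frac{1}{k\,m\,(k+m-\al)},
\]
using that for $0<\al<1$ the exponent $k+m-1-\al > -1$ for all $k,m \geq 1$, so each elementary integral equals $1/(k+m-\al)$.

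The only nontrivial point is to justify the interchange of summation and integration. Since $\al\in(0,1)$ and all summands of $\log^2(1-t) = \sum_{k,m} t^{k+m}/(km)$ are non-negative on $[0,1]$, and $t^{-1-\al}$ is also non-negative, Tonelli's theorem applies directly: the interchange is valid and the resulting double series converges (to a finite value) if and only if $\mcI(\al)$ is finite. Finiteness of $\mcI(\al)$ follows from the estimate $\log^2(1-t) = \mathcal{O}(t^2)$ as $t\dar 0$ (so the integrand is integrable near $0$ for any $\al<1$), together with $\log^2(1-t) = \mathcal{O}((1-t)^0 \cdot \log^2(1-t))$ near $1$ combined with $t^{-1-\al}$ being bounded there, and $\int_0^1 \log^2(1-t)\,\d t<\iy$. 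Hence Tonelli yields \eqref{eq:m3}, completing the argument.

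I expect no real obstacle beyond this routine justification; the identity is essentially a formal series manipulation, and the main content is simply recognizing the double sum as the value of the integral.
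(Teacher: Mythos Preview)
Your proposal is correct and follows essentially the same route as the paper: expand $\log(1-t)$ as its Taylor series, square, interchange sum and integral, and evaluate $\int_0^1 t^{k+m-1-\al}\,\d t = 1/(k+m-\al)$. The only difference is that you supply a Tonelli justification for the interchange and a check that $\mcI(\al)<\infty$, whereas the paper proceeds formally without comment.
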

Proposition~\ref{prop:int_mom} is proved in the Appendix.

Let us now represent the integral, $\mcI(\al)$, as the  generating function of a  harmonic                                                                                                                                                                                                                                                                                                                                                                                                              sequence. Denote by $H_n$,  the sum of the harmonic series
$$H_n=1 + \frac12 + \frac13 + \dots + \frac1n, \quad n=1, 2, \dots, $$
and for  $n=0$ we define $H_0=0$. Obviously, we have 
$$ H_n = H_{n-1} + \frac1n, \quad n=1, 2, \dots. $$
Let us now define the sequence
\beq
     A_\ell \bydef 2\cdot \sum_{n=1}^\iy \frac{H_{n-1}}{n^{\ell+2}}, \quad \ell=0, 1, 2, \dots.  \label{eq:A_ell}
  \eeq   
\begin{proposition}\label{prop:gen_fun_A}
The elements of the sequence $\{ A_\ell\}_{\ell\ge 1}$ satisfy the relation
 \beq
     A_\ell=  \sum_{k=1}^\iy \sum_{m=1}^\iy \frac1k\cdot \frac1m \cdot \frac{1}{(k+m)^{\ell + 1}},
  \quad \ell=0, 1, 2, \dots.  \label{eq:ident1}
  \eeq
The integral $\mcI(\al)$ is the generating function of the sequence $\{ A_\ell\}_{\ell\ge 1}$:
\beq
   \mcI(\al) =  \sum_{\ell=0}^\iy A_\ell \al^{\ell}. \label{eq:gf_Al}
  \eeq  
   \end{proposition}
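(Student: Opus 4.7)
Both identities reduce to manipulations of a single double series, and my strategy is to establish the combinatorial identity (\ref{eq:ident1}) first, then to expand the kernel appearing in Proposition~\ref{prop:int_mom} as a geometric series in $\al$ to obtain (\ref{eq:gf_Al}).

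For (\ref{eq:ident1}), I would start from the right-hand side and group the terms by the value of $n \bydef k+m$. For each fixed $n\ge 2$, the inner sum becomes $\sum_{k=1}^{n-1}\frac{1}{k(n-k)}$, and by the partial-fraction decomposition
$$ \frac{1}{k(n-k)} = \frac{1}{n}\!\(\frac{1}{k} + \frac{1}{n-k}\) $$
this collapses to $\frac{2 H_{n-1}}{n}$, since both $\sum_{k=1}^{n-1}\frac{1}{k}$ and $\sum_{k=1}^{n-1}\frac{1}{n-k}$ equal $H_{n-1}$. Consequently
$$ \sum_{k,m\ge 1}\frac{1}{k\,m\,(k+m)^{\ell+1}} \;=\; 2\sum_{n=2}^{\iy}\frac{H_{n-1}}{n^{\ell+2}} \;=\; A_\ell, $$
where the $n=1$ term may be included at no cost because $H_0=0$.

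For (\ref{eq:gf_Al}), I would invoke Proposition~\ref{prop:int_mom} to write $\mcI(\al)=\sum_{k,m\ge 1}\frac{1}{k\,m\,(k+m-\al)}$ for $0<\al<1$, and then expand each factor as
$$ \frac{1}{k+m-\al} \;=\; \frac{1}{k+m}\cdot\frac{1}{1-\al/(k+m)} \;=\; \sum_{\ell=0}^{\iy}\frac{\al^\ell}{(k+m)^{\ell+1}}, $$
which is valid because $\al<1\le k+m-1$. Interchanging the order of summation, which is legitimate by Tonelli's theorem as every summand is non-negative, yields
$$ \mcI(\al) \;=\; \sum_{\ell=0}^{\iy}\al^\ell \sum_{k,m\ge 1}\frac{1}{k\,m\,(k+m)^{\ell+1}}, $$
and the inner sum equals $A_\ell$ by the step above, delivering (\ref{eq:gf_Al}).

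The main obstacle here is essentially notational rather than analytic: the non-negativity of all summands renders the various interchanges of summation automatic via Tonelli, and the only genuine content is the partial-fraction trick that rewrites the symmetric double sum over $(k,m)$ in terms of harmonic numbers. That manipulation is classical and, in the special case $\ell=0$, recovers Euler's evaluation $A_0=2\sum_{n\ge 1}H_{n-1}/n^2=2\zeta(3)$. The only subtlety worth flagging in the write-up is that $k+m\ge 2$ guarantees the geometric expansion in $\al$ converges uniformly on $\al\in[0,\,1-\delta]$ for every $\delta>0$, so the resulting power series (\ref{eq:gf_Al}) has radius of convergence at least $2$ and in particular represents $\mcI(\al)$ on the whole interval $(0,1)$ relevant here.
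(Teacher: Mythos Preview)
Your proof is correct and follows essentially the same route as the paper's: both arguments invoke Proposition~\ref{prop:int_mom}, expand $\frac{1}{k+m-\al}$ as a geometric series in $\al$, and reduce the resulting coefficient $\sum_{k,m\ge 1}\frac{1}{km(k+m)^{\ell+1}}$ to $2\sum_{n\ge 2}\frac{H_{n-1}}{n^{\ell+2}}$ by grouping over $n=k+m$ and applying the partial-fraction identity $\frac{1}{k(n-k)}=\frac{1}{n}\bigl(\frac{1}{k}+\frac{1}{n-k}\bigr)$. Your write-up is in fact slightly more careful than the paper's, since you explicitly justify the interchange of summation via Tonelli and note the radius of convergence of the resulting power series.
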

The proof of Proposition~\ref{prop:gen_fun_A} is deferred to the Appendix.

The double sums similar to (\ref{eq:ident1}) are usually called {\it Tornheim series or Tornheim double sums}. 
They find applications in many
 areas of mathematics and physics, see  \cite{Kub},  \cite{Torn}.
 
\begin{remark}
The sequence $\{A_\ell\}_{\ell\ge 0}$ can also be expressed in terms of the Riemann zeta function as
follows %%  written as
$$ A_\ell = 2 \sum_{n=1}^\iy  \frac{H_n}{n^{\ell+2}} - 2\zeta(\ell+3). $$
\end{remark}
The coefficient, $A_0$, determines  the asymptotic behaviour of the second moment as $\al\dar 0$.
\begin{proposition}\label{prop:A0}
\beq
    A_0=2\zeta(3) =  2.404138\dots .  \label{eq:A0}
    \eeq
    \end{proposition}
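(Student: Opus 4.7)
The plan is to start from the identity (\ref{eq:ident1}) with $\ell=0$, namely
$$A_0 \;=\; \sum_{k=1}^\infty\sum_{m=1}^\infty \frac{1}{k\,m\,(k+m)},$$
and reduce it to a classical Euler sum. Apply the partial-fraction identity $\frac{1}{km}=\frac{1}{k+m}\bigl(\frac{1}{k}+\frac{1}{m}\bigr)$ to rewrite the summand as $\frac{1}{(k+m)^2}\bigl(\frac{1}{k}+\frac{1}{m}\bigr)$. By the symmetry $k\leftrightarrow m$ this gives
$$A_0 \;=\; 2\sum_{k,m\ge 1}\frac{1}{k\,(k+m)^2}.$$
Substituting $n=k+m$ (so that $k$ ranges over $1,\dots,n-1$) collapses the inner sum to a harmonic number, yielding
$$A_0 \;=\; 2\sum_{n=2}^{\infty}\frac{1}{n^2}\sum_{k=1}^{n-1}\frac{1}{k}\;=\;2\sum_{n=1}^{\infty}\frac{H_{n-1}}{n^2},$$
which is just an independent check of definition (\ref{eq:A_ell}) at $\ell=0$.

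The next step is to convert this to a sum in $H_n$ rather than $H_{n-1}$ via $H_{n-1}=H_n-\frac{1}{n}$, which gives
$$A_0 \;=\; 2\sum_{n=1}^{\infty}\frac{H_n}{n^2}\;-\;2\sum_{n=1}^{\infty}\frac{1}{n^3}\;=\;2\sum_{n=1}^{\infty}\frac{H_n}{n^2}\;-\;2\zeta(3).$$
Now invoke Euler's classical identity $\sum_{n=1}^{\infty}\frac{H_n}{n^2}=2\zeta(3)$, which substituted above immediately gives $A_0=4\zeta(3)-2\zeta(3)=2\zeta(3)$, as claimed. The numerical value $2\zeta(3)\approx 2.404138$ is then just Apéry's constant doubled.

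The only nontrivial ingredient is Euler's identity $\sum_{n\ge 1} H_n/n^2 = 2\zeta(3)$; this is where I would anticipate the real work if a self-contained derivation is required. A short route is to observe that the symmetric double sum $S:=\sum_{k,m\ge 1}\frac{1}{km(k+m)}$ can be evaluated in two ways: on one hand the derivation above yields $S=2\sum H_{n-1}/n^2$, and on the other hand grouping by $n=k+m$ and using $\sum_{k=1}^{n-1}\frac{1}{k(n-k)}=\frac{2H_{n-1}}{n}$ gives $S=2\sum H_{n-1}/n^2$ again, so one needs a genuinely independent evaluation; the standard one uses Abel summation together with the generating-function identity $\sum_{n\ge 1}H_n x^n = -\log(1-x)/(1-x)$, integrating $-\log^2(1-x)/x$ from $0$ to $1$. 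Once Euler's identity is in hand, the proof of Proposition~\ref{prop:A0} is immediate as outlined above.
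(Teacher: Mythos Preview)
Your argument is correct, and the first half coincides with the paper's: both start from $A_0=\sum_{k,m\ge 1}\frac{1}{km(k+m)}$ and, via the partial fraction $\frac{1}{km}=\frac{1}{k+m}\bigl(\frac{1}{k}+\frac{1}{m}\bigr)$ and the substitution $n=k+m$, arrive at $A_0=2\sum_{n\ge 1}H_{n-1}/n^2=2\sum_{n\ge 1}H_n/n^2-2\zeta(3)$.

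The difference lies in how Euler's identity $\sum_{n\ge 1}H_n/n^2=2\zeta(3)$ is obtained. You invoke it as an external result (and correctly flag it as the only nontrivial ingredient, sketching a generating-function route). The paper instead supplies a self-contained derivation by evaluating the \emph{same} double sum a second, genuinely independent way: using the other partial fraction $\frac{1}{m(k+m)}=\frac{1}{k}\bigl(\frac{1}{m}-\frac{1}{k+m}\bigr)$ and summing over $m$ first (a telescope), one gets directly $A_0=\sum_{k\ge 1}H_k/k^2$. Equating this with your expression $2\sum H_k/k^2-2\zeta(3)$ yields Euler's identity on the spot, and hence $A_0=2\zeta(3)$. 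So where you reach for an outside fact, the paper closes the loop with one more line of partial fractions; this is precisely the ``genuinely independent evaluation'' you said was needed but did not find within the double sum itself.
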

    The proof of this proposition is deferred to the Appendix.
    \begin{corollary}
        The variance of the r.v. $\xi_\be$ admits  the following asymptotics as $\al\downarrow 0$: % relation 
        $$ \si^2(\xi_\be) = A_0\cdot\al + o(\al). $$
    \end{corollary}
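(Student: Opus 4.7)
The plan is to reduce the variance asymptotics to the behaviour of $\mcI(\al)$ near $\al=0$, which is essentially read off from the generating-function identity already established in Proposition~\ref{prop:gen_fun_A}. First I would write
\[
   \si^2(\xi_\be) \;=\; \E[\xi_\be^2] - \bigl(\E[\xi_\be]\bigr)^2 \;=\; \al\,\mcI(\al) \;-\; \frac{\pi^4}{36}\,\al^2,
\]
using Proposition~\ref{prop:sec_mom} for the second moment and Proposition~\ref{prop:int} for the first. The subtracted term is manifestly $O(\al^2)$, so the entire question collapses to showing that $\al\,\mcI(\al) = A_0\,\al + o(\al)$ as $\al\downarrow 0$, or equivalently that $\mcI(\al)\to A_0$ with at most linear rate in $\al$.

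For this, I would invoke the power-series representation $\mcI(\al) = \sum_{\ell\ge 0} A_\ell\,\al^{\ell}$ from Proposition~\ref{prop:gen_fun_A} and separate off the constant term to obtain
\[
   \mcI(\al) - A_0 \;=\; \al\cdot\sum_{\ell=0}^{\infty} A_{\ell+1}\,\al^{\ell}.
\]
It suffices to bound this tail sum uniformly for $\al$ in a right neighbourhood of $0$. From the defining formula $A_\ell = 2\sum_{n\ge 2} H_{n-1}/n^{\ell+2}$, each summand is non-negative and monotonically decreasing in $\ell$ (since $n\ge 2$), hence $\{A_\ell\}_{\ell\ge 0}$ is itself non-increasing, so $0 \le A_\ell \le A_0 = 2\zeta(3)$ by Proposition~\ref{prop:A0}. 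Consequently the tail series is dominated term-by-term by $A_0\sum_{\ell\ge 0}\al^{\ell} = A_0/(1-\al)$, which is bounded on, say, $[0,1/2]$. This yields $\mcI(\al) - A_0 = O(\al)$, and combining with the earlier display,
\[
   \si^2(\xi_\be) \;=\; A_0\,\al + O(\al^2) \;=\; A_0\,\al + o(\al).
\]

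The main obstacle is purely a matter of continuity of $\mcI$ at $\al=0$ with a linear rate, and the heavy lifting has already been done in Propositions~\ref{prop:gen_fun_A} and~\ref{prop:A0}. Once the uniform bound $A_\ell \le A_0$ is observed, no delicate interchange of limits or further asymptotic analysis is needed; the proof is essentially a one-line consequence of the generating-function expansion of $\mcI(\al)$.
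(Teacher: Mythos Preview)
Your proof is correct and follows essentially the same route as the paper's: the paper simply cites (\ref{eq:E2_alf}) and (\ref{eq:gf_Al}) and declares the corollary immediate, whereas you spell out the variance decomposition explicitly and supply the uniform bound $A_\ell\le A_0$ to control the tail of the power series. The extra rigor you add is welcome but not a different approach.
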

    \begin{proof}
    The statement of the corollary immediately follows from Equations (\ref{eq:E2_alf}) and (\ref{eq:gf_Al}).
    \end{proof}
    Computation of the coefficients, $A_1$, $A_2$, ..., $A_N$ requires computation of the harmonic sums 
    $$ \mcS_k = \sum_{n=1}^\iy \frac{H_n}{n^k},\quad  k=2, 3, \dots. $$
    \begin{corollary}[compare with \cite{AliD},  \cite{Flaj}]
    $$ $$
    \begin{itemize}
    \item[i.] 
    The coefficient $A_1$ satisfies the relation
    $$ A_1 = 2 \sum_{k=1}^\iy \frac{H_k}{k^3} - 2 \zeta(4)=\frac{\pi^4}{180}.  $$
    \item[ii.] The harmonic sum
    $$ \sum_{k=1}^\iy \frac{H_k}{k^3}=\frac{\pi^4}{72}. $$
    \end{itemize}
    \end{corollary}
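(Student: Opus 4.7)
The plan splits into a trivial reduction, one classical input, and a short arithmetic check.

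\textbf{Step 1 (reduction).} Specializing the remark stated immediately before the corollary to $\ell = 1$, and using $H_{n-1} = H_n - 1/n$, I would rewrite $A_1 = 2\sum_n H_{n-1}/n^3$ as
$$A_1 = 2\sum_{n\ge 1}\frac{H_n}{n^3} - 2\zeta(4),$$
which is precisely the first equality in (i). Both remaining assertions now collapse onto the single Euler sum
$$\sum_{n=1}^{\infty}\frac{H_n}{n^3} \;=\; \tfrac{5}{4}\zeta(4) \;=\; \frac{\pi^4}{72}. \qquad (\ast)$$

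\textbf{Step 2 (the Euler sum).} I would establish $(\ast)$ by invoking Euler's symmetric reduction formula for linear Euler sums (see \cite{Flaj}, \cite{AliD}),
$$2\sum_{n\ge 1}\frac{H_n}{n^m} \;=\; (m+2)\,\zeta(m+1) \;-\; \sum_{j=1}^{m-2}\zeta(j+1)\,\zeta(m-j),\qquad m\ge 2,$$
specialized at $m = 3$ to give $2\sum_n H_n/n^3 = 5\zeta(4) - \zeta(2)^2$; combined with $\zeta(2)=\pi^2/6$ and $\zeta(4)=\pi^4/90$ this yields $(\ast)$. A self-contained alternative is to evaluate $\int_0^1 \log x\,\log^2(1-x)\,dx/x$ in two distinct ways: once via the series $\log^2(1-t)=2\sum_n H_{n-1}t^n/n$ (which returns $-A_1$), and once as a third mixed partial of the Beta function $B(p,q)$ at $(p,q)=(0,1)^{+}$ expressed through the derivatives $\Gamma^{(k)}(1)$ in terms of $\gamma$ and $\zeta(2), \zeta(3)$.

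\textbf{Step 3 (arithmetic).} Assertion (ii) is $(\ast)$ itself. Substituting $(\ast)$ into the outcome of Step~1 yields
$$A_1 \;=\; \frac{\pi^4}{36} - \frac{\pi^4}{45} \;=\; \frac{\pi^4}{180},$$
completing (i).

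\textbf{Main obstacle.} The apparent difficulty is that every route that tries to stay purely inside the machinery built up in this paper reduces to a tautology. Starting from the generating function $\mcI(\al) = \sum_\ell A_\ell\al^\ell$ of Proposition~\ref{prop:gen_fun_A} and integrating by parts, or expanding $\log^2(1-t)$ as above, simply returns $A_1 = 2\sum_n H_{n-1}/n^3$; the Tornheim representation $A_1 = \sum_{k,m\ge 1} 1/(km(k+m)^2)$ from (\ref{eq:ident1}), after the partial-fraction symmetrization $1/(km)= (1/k + 1/m)/(k+m)$, also rearranges to the same Euler sum. Consequently the closed value $\pi^4/72$ is not derivable by rearrangement alone and must be imported from the external theory of Euler sums; that importation is the only non-formal step in the proof.
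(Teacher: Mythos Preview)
Your proposal is correct, and in fact it does more than the paper itself: the paper states this corollary with the parenthetical ``compare with \cite{AliD}, \cite{Flaj}'' and gives no proof whatsoever, treating both the expression $A_1 = 2\sum_k H_k/k^3 - 2\zeta(4)$ and the closed value $\sum_k H_k/k^3 = \pi^4/72$ as known results imported from those references. Your Step~1 reduction is the natural reading of the definition $A_\ell = 2\sum_n H_{n-1}/n^{\ell+2}$ (or equivalently of the Remark preceding the corollary), and your Step~2 is exactly the kind of external input the citations are meant to cover; your candid observation in the ``Main obstacle'' paragraph---that the paper's internal machinery only returns $A_1$ in the form $2\sum_n H_{n-1}/n^3$ and cannot by itself produce the $\pi^4$ value---is accurate and explains why the paper defers to the literature here rather than supplying a self-contained argument.
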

    Numerous representations of such character  for  harmonic sums can also be found in \cite{Flaj}.
    
    Analysis of the higher moments of $\xi_\be$ leads to relations generalizing properties  of 
    harmonic double sums. This line of research will be considered in our subsequent work.  %elsewhere.
  
%\par\bigskip\noindent
{\bf Acknowledgment.} VVV appreciates hospitality of the Fields
Institute and York University.

%  \hfill\eject
\bibliographystyle{plain}
\bibliography{Lambert}

%\hfill\eject
\appendix
\section{Proof of Technical Results}
\begin{proof}[Proof of Proposition~{\rm \ref{prop:asympt_y1}}]
  Let us represent the function, $y_\be$, as 
  \beq   y_\be(x) \bydef x^{\frac{1}{\be-1} } \cdot \hat z(x).   \label{eq:y_z} \eeq
  Then, obviously,
  $$ \hat z(x) =  y_\be(x) \cdot x^{\frac{1}{1-\be} } .  $$
  From Equation~(\ref{eq:asymp_y}) it follows that the limit
  $$ \lim_{x\dar 0} \hat z(x) =1.  $$
It is convenient to introduce the variable $t\bydef x^{\frac{\be}{\be -1} }$. 
Denote by $z(t)\bydef \hat z(x)$. Then we obtain that the function $z(t)$
satisfies the implicit equation
\beq
    z^{\be-1}(t) = 1 - \frac12 z(t)\cdot t + \frac16 (z(t)\cdot t)^2 -\frac1{24} (z(t)\cdot t)^3 + \dots \label{eq:z_t}
\eeq
Let us represent the function $z(t)$ in the following form:
\beq 
    z(t)\bydef 1 + \sum_{n=1}^\iy z_n t^n.  \label{eq:z_expan}
 \eeq   
Then substituting (\ref{eq:z_expan}) into (\ref{eq:z_t}), we recursively derive the first coefficients in  Expansion (\ref{eq:z_expan}):
$$
     z_1 = -\frac{1}{2(\be -1)}, \quad z_2 = \frac{\be+8}{24(\be-1)^2}, \quad 
     z_3 = -\frac{\be + 3}{12(\be -1)^3}.
$$
The proposition is thus proved.
 \end{proof}
\begin{proof}[Proof of Lemma~{\rm \ref{lem:mom_Mn}}]
We have to prove that the moments $M_n$ have the following asymptotics: 
$$
    M_n =c_\be\cdot n! + o\({n!}\) \quad \text{as $n\to \iy$},  
  $$
  where the constant $c_\be$ satisfies the inequality $\frac{1}{\be} < c_\be < \frac{1}{\be - 1}.$
  Since the support of the measure $\mu$ is $\R^1_+$, the $n^{th}$ moment, $M_n$,  can be evaluated as follows:
$$
    M_n = n \int_0^\iy x^{n-1} \bar F(x) \d x,   
$$
where the survival function $\bar F(x)=\mu([x, +\iy))$. Therefore,
\beq M_n =n \int_0^\iy x^{n-1} \bar y_\be(x) \d x,   \label{eq:mom_tail} \eeq
where $\bar y_\be(x) = 1 - y_\be(x)$. Combining (\ref{eq:ineq}) and (\ref{eq:mom_tail}) we derive the inequalities
\beq  \int_0^\iy \!x^{n-1}\! \(1 - (1 - e^{-x})^{1/\be}\) \!\d x < \!\frac{M_n}{n}\! < 
         \int_0^\iy \!x^{n-1}\! \(1 - (1 - e^{-x})^{\frac{1}{\be-1}}\)\! \d x. \label{eq:ineq_Mn}
     \eeq    
 Next, consider the integral
 $$ \mcJ(\ga)\bydef  n \int_0^\iy x^{n-1} \(1 - (1 - e^{-x})^{1/\ga}\) \d x, \quad \ga>0.  $$
 Taking into account that
 $$  (1 - e^{-x})^{1/\ga} = 1 -\frac1\ga e^{-x} + \frac{1-\ga}{2\ga^2} e^{-2x} + o\(e^{-2x}\) $$
 as $x\to\iy $, we obtain the following asymptotic relation as $n\to\iy$
 \beq 
     \mcJ(\ga) = \frac1\ga n \Gamma(n) - \frac{n\cdot \Gamma(n)}{2^n} \frac{1-\ga}{ \ga^2} + o\(\frac{n!}{2^n}\), 
 \label{eq:asympt_J}
 \eeq
 where $\Gamma(n) $ is the Gamma function. Then from (\ref{eq:ineq_Mn}) and (\ref{eq:asympt_J}) we obtain
 that
 $$ \limsup_{n\to\iy} \frac{M_n}{n!} =c_\be, \quad \frac1{\be}\le c_\be \le\frac{1}{\be-1}. $$
 The lemma is thus proved.
\end{proof}

\begin{proof}[Proof of Proposition~{\rm \ref{prop:int_mom}}]
Recall the representation of the function $\log(1-t)$ by the following convergent series:
$$-\log(1-t) = \sum_{k=1}^\iy \frac{t^k}{k}, \quad 0\le t < 1. $$
Hence, the integral $\mcI(\al)$ can be written as follows:
$$  \mcI(\al)=  \int_0^1 t^{-1 -\al} \sum_{k=1}^\iy \frac{t^k}{k} \cdot \sum_{m=1}^\iy  \frac{t^m}{m} \d t.  $$
The latter relation implies that
\begin{eqnarray*}
      \mcI(\al) &=&  \int_0^1 t^{-1 -\al} \sum_{k=1}^\iy \frac{t^k}{k} \cdot \sum_{m=1}^\iy  \frac{t^m}{m} \d t \\  
      &=& \sum_{k=1}^\iy \frac{1}{k} \cdot \sum_{m=1}^\iy \frac{1}{m} \int_{0}^1 t^{k+m-\al-1} \d t \\
      &=&  \sum_{k=1}^\iy \sum_{m=1}^\iy \frac{1}{k\cdot m\cdot (k+m-\al)}.
   \end{eqnarray*}   
The proposition is thus proved.
\end{proof}
\begin{proof}[Proof of Proposition~{\rm \ref{prop:gen_fun_A}}]
Consider the double sum in  (\ref{eq:m3}). We have that for  $0<\al < 1$, 
$$
    \frac{1}{k+m-\al} = \frac1{k+m} \cdot \sum_{\ell=0}^\iy \frac{\al^\ell}{(k+m)^\ell}.
$$
Then we derive that
\begin{eqnarray*}
    \mcI(\al) &=& \sum_{k=1}^\iy \sum_{m=1}^\iy \frac1k\cdot \frac1m \cdot \frac{1}{k+m-\al}  \\
    &=& \sum_{\ell=0}^\iy \al^\ell \sum_{k=1}^\iy \sum_{m=1}^\iy \frac1k\cdot \frac1m \cdot \frac{1}{(k+m)^{\ell + 1}}.
     \end{eqnarray*}   
    The latter relation implies (\ref{eq:ident1}). Now, consider  the inner double sum,
    $$ A_\ell= \sum_{k=1}^\iy \sum_{m=1}^\iy \frac1k\cdot \frac1m \cdot \frac{1}{(k+m)^{\ell + 1}}. $$
Changing the summation index, $n=k+m$, we obtain that
  \begin{eqnarray*}
     A_\ell &=&  \sum_{k=1}^\iy \sum_{m=1}^\iy \frac1k\cdot \frac1m \cdot \frac{1}{(k+m)^{\ell + 1}} \\
    %% &=&  \sum_{n=2}^\iy \frac{1}{n^{\ell + 1}}\sum_{k=1}^{n-1} \(\frac1k + \frac{1}{n-k}\)  \\ 
    &=&  \sum_{n=2}^\iy \frac{1}{n^{\ell+2}} \sum_{k=1}^{n-1} \(\frac1k + \frac{1}{n-k}\).
 \end{eqnarray*}   
 It is obvious that the latter sum is equal to $2 H_{n-1}$. Therefore,
 $$  \mcI(\al) = 2\sum_{\ell=0}^\iy \al^\ell \sum_{n=2}^\iy \frac{H_{n-1}}{n^{\ell+2}}.  $$
 From the definition of the sequence $A_\ell$ (see Equation~(\ref{eq:A_ell})) we obtain that
 $$ \mcI(\al) = \sum_{\ell=0}^\iy A_\ell \al^{\ell }, $$ as was to be proved.
 \end{proof}
 % \hfill\eject
 
 \begin{proof}[Proposition~{\rm \ref{prop:A0}}]
 Let us prove that 
$ A_0=2\zeta(3) $. From~(\ref{eq:ident1}) we have %% obtain that
   $$ A_0=\sum_{k\ge 1}  \sum_{m\ge 1} \frac{1}{k\cdot m\cdot (k+m)}. $$
   Then we derive that
   \begin{eqnarray*}
       \sum_{k\ge 1}  \sum_{m\ge 1} \frac{1}{k\cdot m\cdot (k+m)} &=&
       \sum_{n=2}^\iy  \sum_{k=1}^{n-1}\frac{1}{n} \cdot \frac{1}{k\cdot (n-k)} \\
       &=&
       \sum_{n=2}^\iy  \sum_{k=1}^{n-1}\frac{1}{n^2} \(\frac{1}{k} +\frac1{ (n-k)}\) \\
       &=&
       2\sum_{n=1}^\iy  \frac{1}{n^2} \cdot \(H_{n} -\frac1n\).
   \end{eqnarray*}  
   On the other hand,
    \begin{eqnarray*}
       \sum_{k\ge 1}  \sum_{m\ge 1} \frac{1}{k\cdot m\cdot (k+m)} &=&
       \sum_{k\ge 1}  \sum_{m\ge 1} \frac{1}{k^2}\cdot\(\frac1m - \frac1{k+m}\) \\ &=&
       \sum_{k\ge 1}   \frac{1}{k^2}\cdot H_k.
       \end{eqnarray*}
       Thus, we obtain that
       $$  \sum_{k= 1}^\iy   \frac{1}{k^2}\cdot H_k =2\cdot \sum_{k= 1}^\iy   \frac{1}{k^3}
= 2 \zeta(3). $$
Formula (\ref{eq:A0}) is hence derived.
\end{proof}
 \end{document}